\newcommand{\leqnomode}{\tagsleft@true}
\newcommand{\reqnomode}{\tagsleft@false}
\newtheorem {theorem}{Theorem}[section]
\newtheorem {corollary}{Corollary}[section]
\newtheorem {lemma}{Lemma}[section]
\newtheorem {example}{Example}[section]
\newtheorem {definition}{Definition}[section]
\newtheorem {remark}{Remark}[section]
\newcommand{\ER}{\overline{\mathbb{R}}}
\newcommand{\bfz}{{\bf 0}}
\newcommand{\R}{{\mathbb R}}
\newcommand{\sph}{\mathbb{S}}
\newcommand{\N}{\mathbb{N}}
\newcommand{\B}{\mathbb{B}}
\newcommand{\cl}{{\rm cl\,}}
\def\ees{{\accent"5E e}\kern-.385em\raise.2ex\hbox{\char'23}\kern-.08em}
\def\EES{{\accent"5E E}\kern-.5em\raise.8ex\hbox{\char'23 }}
\def\ow{o\kern-.42em\raise.82ex\hbox{
   \vrule width .12em height .0ex depth .075ex \kern-0.16em \char'56}\kern-.07em}
\def\OW{O\kern-.460em\raise1.36ex\hbox{
\vrule width .13em height .0ex depth .075ex \kern-0.16em \char'56}\kern-.07em}
\title{Existence of Pareto Solutions for Vector Polynomial Optimization Problems with Constraints}
\author{Yarui Duan}
\address[Yarui Duan]{School of Mathematical Sciences, Soochow University, Suzhou 215006, China}
\email{dyrsuda@163.com}
\author{Liguo Jiao}
\address[Liguo Jiao]{Academy for Advanced Interdisciplinary Studies, Northeast Normal University,
Changchun 130024, Jilin Province, China}
\email{hanchezi@163.com; jiaolg356@nenu.edu.cn}
\author{Pengcheng Wu$^{*}$}
\address[Pengcheng Wu]{School of Mathematical Sciences, Soochow University, Suzhou 215006, China}
\email{pcwu0725@163.com}
\thanks{$^{*}$Corresponding Author}
\author{Yuying Zhou}
\address[Yuying Zhou]{School of Mathematical Sciences, Soochow University, Suzhou 215006, China}
\email{yuyingz@suda.edu.cn}
\date{\today}
\begin{document}

\begin{abstract}
This paper deals with a  vector polynomial optimization problem over a basic closed semi-algebraic set.
By invoking some powerful tools from real semi-algebraic geometry, we first introduce the concept called {\it tangency varieties};   obtain   the relationships of the Palais--Smale condition, Cerami condition, {\it M}-tameness, and properness related to the considered problem, in which the condition of Mangasarian--Fromovitz constraint qualification at infinity plays an essential role in deriving these relationships. At last
according to the obtained connections, we establish the existence of Pareto solutions to the problem in consideration  and   give some examples to illustrate our main findings.
\end{abstract}
\subjclass[2010]{90C29, 90C30, 49J30}
\keywords{Vector optimization; polynomial optimization;  Pareto solutions; Palais--Smale condition; Cerami condition; properness}
\maketitle

\section{Introduction}\label{Sec:1}

Existence of optimal solutions to optimization problems is a rather important issue in in the study of optimization theory.
In the literature on vector optimization (among others), one can find a lot of papers dealing with the existence of different kinds of solutions to vector optimization problems; see, e.g., \cite{Borwein1983,Deng1998a,Deng1998b,Deng2010,Gutierrez2014,Huang2004,Kim2019,Kim2020} and the references therein.

Consider the following constrained vector {\em polynomial} optimization problem
\begin{align}\label{problem}
{\rm Min}_{\mathbb{R}^p_+}\;\big\{f(x)\,\colon \,x\in S\big\},\tag{VPO}
\end{align}
where $f(x):=(f_1(x), \ldots, f_p(x))$ is a real polynomial mapping, and
\begin{align}\label{SAset}
S:=\{x\in\R^n \colon g_i(x)=0,i=1,\ldots,l,\ h_j(x)\geq0,j=1,\ldots,m\}
\end{align}
is the feasible set of the problem~\eqref{problem}, in which, $g_i, i = 1, \ldots, l,$ and $h_j, j = 1, \ldots, m$ are all real polynomials.
As we will see from Definition~\ref{definitionSA} that $S$ is a closed semi-algebraic set.
Furthermore,   make the following assumption:
\begin{itemize}
\item[$\quad$] $\qquad\qquad\qquad\qquad\qquad\quad$ \framebox[1.07\width]{the feasible set $S$ is unbounded.}
\end{itemize}
Note also that the ``${\rm Min_{\R^p_+}}$" in the above problem~\eqref{problem} is understood in the vector sense, where a partial ordering is induced in the image space $\R^p,$ by the non-negative orthant $\R^p_+.$
The partial ordering says that $a \geq b,$ if $a - b \in \R^p_+,$ which can equivalently be written as $a_k \geq b_k,$ for all $k = 1, \ldots, p,$ where $a_k$ and $b_k$ stand for the $k$th component of the vectors $a$ and $b,$ respectively.

\subsection{Pareto values and solutions}
In what follows, we recall the Pareto values and Pareto soultions to the problem~\eqref{problem}.
Unless the classical literature on vector optimization (see, e.g., \cite{Ehrgott2005,Jahn2004,Luc1989,Sawaragi1985}), we will first introduce the Pareto values to the problem~\eqref{problem}, then give the definition of its Pareto solutions.
Let $f(S)$ be the image of the restrictive real polynomial mapping $f$ over $S$.
\begin{definition}\label{Pareto}{\rm
Let $y\in \,\mathrm{cl} f(S)$.
\begin{itemize}
\item[(i)] $y\in \R^p$ is called a {\em Pareto value} to the problem~\eqref{problem} if
		$$f(x)\notin y - (\mathbb{R}^p_+\setminus\{{\bfz}\}), \quad \forall x \in S,$$
		where $\bfz:=(0, \ldots, 0) \in \R^p$. The set of  all Pareto values to the problem~\eqref{problem} is denoted by $\mathrm{val}\,\eqref{problem}$.

\item[(ii)] $y\in \R^p$ is  called a {\em weak Pareto value} to the problem~\eqref{problem} if
		$$f(x)\notin y -{\rm int}\,\mathbb{R}^p_+, \quad \forall  x\in S.$$
		Denote $\mathrm{val}^w\,\eqref{problem}$ as  the set of  all weak Pareto values to the problem~\eqref{problem}.

\item[(iii)] $\bar x \in S$ is called a {\em Pareto solution} (resp., {\em weak Pareto solution}) to the problem~\eqref{problem} if $f(\bar x)$ is a Pareto value (resp., weak Pareto value) to the problem~\eqref{problem}. Denoted   $\mathrm{sol}\,\eqref{problem}$ (resp., $\mathrm{sol}^w\,\eqref{problem}$) as the set of all Pareto solutions (resp., weak Pareto solutions).
\end{itemize}
}\end{definition}
	
According to the above definitions, it is clear that $\mathrm{val}\,\eqref{problem}\subset \mathrm{val}^w\,\eqref{problem}$.

\begin{definition}\label{section}{\rm
Let $\Omega$ be a subset in $\R^p$ and $\bar y \in \R^p.$
The set $\Omega \cap (\bar y - \R^p_+)$ is said to be a {\it section} of $\Omega$ at $\bar y,$ and denoted by $[\Omega]_{\bar y}.$
The section $[\Omega]_{\bar y}$ is said to be {\it bounded} if and only if there is $\omega \in \R^p$ such that
$$[\Omega]_{\bar y} \subset \omega + \R^p_+.$$
}\end{definition}

\subsection{Backgrounds}
In this part, we will treat the problem~\eqref{problem} as a standard vector optimization problem (not necessarily under the polynomial setting).

Firstly, let us recall some results on the existence of Pareto solutions to the problem~\eqref{problem} in the case that the feasible set $S$ is nonempty and compact.
If in addition $f$ is $\R^p_+$-semicontinuous (see \cite[Definition 2.16]{Ehrgott2005}), then the existence of Pareto solutions to the problem~\eqref{problem}  was shown by Hartley \cite{Hartley1978} in 1978.
Later, Corley \cite{Corley1980} in 1980 proved also the existence of Pareto solutions to the problem~\eqref{problem}, if the image $f(S)$ is nonempty and $\R^p_+$-semicompact (see \cite[Definition 2.11]{Ehrgott2005}).
In 1983, it was observed by Borwein \cite[Theorem 1]{Borwein1983} that the condition ``the image $f(S)$ has at least one nonempty {\it closed} and {\it bounded} section" is a necessary and also {\it sufficient} condition for the existence of Pareto solutions to the problem~\eqref{problem}; see also \cite[Theorem 2.10]{Ehrgott2005}.
Clearly, the compactness of $S$ together with the continuity (or even semi-continuity) of   $f$ ensures the compactness of the image $f(S),$ in this case, the problem~\eqref{problem} admits at least one Pareto solution; see, e.g., \cite[Corollary 3.2.1]{Sawaragi1985}.

Now, we recall some existence results for the problem~\eqref{problem} in another case that the feasible set $S$ is not compact.
By assuming that the objective function is bounded from below and satisfies the so-called (PS)$_1$ condition,  H\`a \cite{Ha2006JMAA} proved  that   the problem~\ref{problem} has {\em weak} Pareto solutions in 2006 (see \cite[Theorem 4.1]{Ha2006JMAA}).
Later, by exploring  {\em quasiboundedness from below} and {\em refined subdifferential Palais--Smale condition}, Bao and Mordukhovich \cite{Bao2007,Bao2010MP} investigates some  vector optimization problems.
It is worth to note that   they established  only the existence of weak or relative  Pareto
solutions, but not  the existence of Pareto
solutions to the vector optimization problems in \cite{Bao2007,Bao2010MP,Ha2006JMAA}.

In order to obtain the results for existence of Pareto solutions, Lee et al. \cite[Theorem 3.1]{Lee2021} proved that the problem~\eqref{problem} admits a Pareto solution if and only if the image $f(S)$ of $f$ has a nonempty and bounded section for the case that $f$ is a convex polynomial mapping (each component of $f$ being convex polynomial), in which the celebrated existence results for scalar convex polynomial programming problems contributed by Belousov and Klatte \cite[Theorem 3]{Belousov2002} are applied.

Very recently, for the case that $S=\R^n$ and the image $f(\R^n)$ of a polynomial mappying $f$ has a bounded section,
Kim et al. \cite{Kim2019} investigated the existence of Pareto solutions to the problem~\eqref{problem} under
some novel conditions.

Furthermore, in order to investigate existence results in more general setting, by employing the theory of variational analysis and nonsmooth analysis (instead methods of semialgebraic geometry), Kim et al \cite{Kim2020} furtherly proved that nonconvex and nonsmooth vector optimization problems with locally Lipschitzian data have   Pareto efficient and Geoffrion-properly efficient solutions.
It is also worth mentioning that, Liu et al \cite{Liu2021} studied the solvability for a class of regular polynomial vector optimization problem without convexity, and interestingly even without semi-algebraic assumption for the feasible set $S$ (see \cite[Example 5.4]{Liu2021}).

\subsection{Our contributions}

In this paper, we will make the following contributions to the area of vector optimization with polynomials.
\begin{itemize}
\item[{\rm (i)}]We prove the existence of Pareto solutions to the {\it constrained} vector polynomial optimization problem~\eqref{problem} under some conditions.
Comparing with \cite{Lee2021}, we do not need any convexity assumptions in the problem~\eqref{problem}, and comparing with \cite{Kim2019}, we further consider the problem~\eqref{problem} over a closed (and unbounded) semi-algebraic set $S.$
\item[{\rm (ii)}] By constructing some suitable sets (that can be computed effectively) related to the problem~\eqref{problem}, we define the concepts concerning Palais--Smale condition, Cerami condition and $M$-tameness, and also establish some relationships between them (see Theorem~\ref{relationship1}).
All of these concepts play the important roles in establishing some sufficient conditions for the existence of Pareto solutions to the problem~\eqref{problem}.
\item[{\rm (iii)}]
It is worth emphasizing that, in Theorem~\ref{relationship1}, the Mangasarian--Fromovitz constraint qualification at infinity of $S$ (see Definition~\ref{regulity}) plays an essential role.
This significantly improves \cite[Proposition 3.2]{Kim2019}.
In order to highlight this observation, we construct an example (see Example~\ref{example0717}) to show that the assumption on Mangasarian--Fromovitz constraint qualification at infinity of $S$ cannot be dropped.
Besides, we also design several examples to illustrate some related terminologies and the obtained results.
\item[{\rm (iv)}] As results, we establish some sufficient conditions for the existence of Pareto solutions to the problem~\eqref{problem}.
The obtained results improve and extend \cite[Theorem 4.1]{Kim2019}, \cite[Theorem 4.1]{Ha2006JMAA}, \cite[Theorem 4]{Bao2007} and \cite[Theorem 4.4]{Bao2010MP}, in the polynomial setting.
\end{itemize}

\medskip

The rest of the paper is organized as follows.
In Sect.~\ref{Sec:2}, we recall some necessary tools from real semi-algebraic geometry.
In Sect.~\ref{Sec:3}, we introduce the concept of the tangency variety, which will be useful in the later, and its properties.
In Sect.~\ref{Sec:4}, we construct some suitable sets, by which, we establish some relationships between Palais--Smale condition, Cerami condition, $M$-tameness, and properness for the restrictive polynomial mappings.
Section~\ref{Sec:5} contains several   existence results of Pareto solutions to the problem~\eqref{problem}.
Finally, conclusions and further discussions are given in Sect.~\ref{Sec:7}.

\section{Preliminaries}\label{Sec:2}
Throughout this paper, we use the following notation and terminology.
Fix a number $n \in \N,$ $n \geq 1,$ and abbreviate $(x_1, x_2, \ldots, x_n)$ by $x.$
The space $\R^n$ is equipped with the usual scalar product $\langle \cdot, \cdot \rangle$ and the corresponding Euclidean norm $\| \cdot \|.$
The interior (resp., the closure) of a set $S$ is denoted by ${\rm int} S$ (resp., $\cl S$).
The closed unit ball in $\R^n$ is denoted by $\B^n.$
Let $\R^p_+:= \{y := (y_1, \ldots, y_p) \colon y_j \geq 0, \ j = 1, \ldots, p\}$ be the nonnegative orthant in $\R^p.$
The cone $\R^p_+$ induces the following partial order in $\R^p: a, b \in \R^p,$
$a \leq b$ if and only if $b - a \in \R^p_+.$
Besides, $\R[x]$ stands for the space of real polynomials in the variable $x.$
Let us recall some notion and results from semi-algebraic geometry (see,
e.g., \cite{RASS,Bochnak1998}).

\begin{definition}\label{definitionSA}{\rm
\begin{enumerate}
  \item[(i)] A subset of $\mathbb{R}^n$ is called a  {\em semi-algebraic} set if
	  it is a finite union of sets of the form
$$\{x \in \mathbb{R}^n \colon \varrho_i(x) = 0, i = 1, \ldots, k;\ \varrho_i(x) > 0,
i = k + 1, \ldots, p\},$$
where all $\varrho_{i}$'s are in $\R[x]$.
 \item[(ii)]
Let $B_1 \subset \Bbb{R}^n$ and $B_2 \subset \Bbb{R}^m$ be semi-algebraic
sets. A mapping $F \colon B_1 \to B_2$ is said to be {\em semi-algebraic} if
its graph
$$\{(x, y) \in B_1 \times B_2 \colon y = F(x)\}$$
is a semi-algebraic subset in $\Bbb{R}^n\times\Bbb{R}^m.$
In particular, if $m=1,$ we call the mapping $F$ a semi-algebraic function.
\end{enumerate}
}\end{definition}

The semi-algebraic sets and functions have many
remarkable properties; see, e.g., \cite{RASS,Bochnak1998,HaHV2017}.


\begin{theorem}[Tarski--Seidenberg Theorem]\label{Tarski Seidenberg Theorem}
The image and inverse image of a semi-algebraic set under a semi-algebraic mapping are semi-algebraic sets.
In particular$,$ the projection of a semi-algebraic set is still a semi-algebraic set.
\end{theorem}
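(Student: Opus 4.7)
The plan is to reduce the theorem to its second assertion about projections, and then to handle projection by an algebraic quantifier-elimination argument.

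First I would observe that the image and inverse image statements follow once projections are known to preserve semi-algebraicity. Indeed, for any semi-algebraic map $F \colon B_1 \to B_2$ and semi-algebraic sets $A \subseteq B_1$, $C \subseteq B_2$, one has
\[
F(A) \,=\, \pi_2\bigl(\mathrm{graph}(F) \cap (A \times B_2)\bigr), \qquad F^{-1}(C) \,=\, \pi_1\bigl(\mathrm{graph}(F) \cap (B_1 \times C)\bigr),
\]
where $\pi_1, \pi_2$ denote the coordinate projections. Since the defining families in Definition \ref{definitionSA} are manifestly closed under finite union and intersection, the sets inside the projections are semi-algebraic in $\R^n \times \R^m$, so the whole statement reduces to the projection assertion. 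Iterating one coordinate at a time, this further reduces to projections $\R^{n+1} \to \R^n$ along the last coordinate.

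Next, since projection commutes with finite union and every semi-algebraic set in $\R^{n+1}$ is by definition a finite union of basic ones, it is enough to show that whenever $\varrho_1,\ldots,\varrho_p \in \R[x_1,\ldots,x_n,y]$, the set
\[
T \,:=\, \bigl\{x \in \R^n \colon \exists\, y \in \R \text{ with } \varrho_i(x,y) = 0 \ (i \leq k),\ \varrho_j(x,y) > 0 \ (k < j \leq p)\bigr\}
\]
is semi-algebraic in $\R^n$. This is precisely the step that requires eliminating the existential quantifier on $y$.

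This quantifier elimination is the main obstacle and constitutes the algebraic heart of the Tarski--Seidenberg theorem. For fixed $x \in \R^n$, whether a realizing $y$ exists is governed by the signs of the one-variable polynomials $\varrho_i(x, \cdot) \in \R[y]$, whose coefficients are themselves polynomials in $x$. Using Sturm's theorem, or equivalently Thom's encoding of real roots via subresultants, one can express ``there exists $y \in \R$ realizing a prescribed sign pattern of $\varrho_1(x,\cdot), \ldots, \varrho_p(x,\cdot)$'' as a Boolean combination of polynomial sign conditions on the coefficients of these polynomials, and hence as a semi-algebraic condition on $x$. Summing over the finitely many admissible sign patterns yields a semi-algebraic description of $T$. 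The detailed verification of this algebraic step is classical and I would simply invoke the exposition in Bochnak--Coste--Roy \cite{Bochnak1998}.
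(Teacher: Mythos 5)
Your reduction of the image and inverse-image statements to projections, and of projections along one coordinate to eliminating a single existential quantifier over basic semi-algebraic sets, is correct and is exactly the classical route; the paper itself offers no proof of this theorem, stating it as a known result with references \cite{RASS,Bochnak1998}, so your outline simply matches the standard treatment in those sources. The only substantive content --- the quantifier-elimination step via Sturm sequences or Thom encodings --- you defer to Bochnak--Coste--Roy, which is acceptable here since that is precisely where the paper points the reader as well.
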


The Curve Selection Lemma at infinity (see \cite{HaHV2017,Milnor1968})  will be frequently used in this paper.

\begin{lemma}[Curve Selection Lemma at infinity]\label{CurveSelectionLemmaatinfinity}
Let $A$ be a semi-algebraic subset of $\mathbb{R}^n,$ and let
$$\varrho:=(\varrho_1, \ldots, \varrho_p): \R^n \rightarrow \R^p$$
be a semi-algebraic mapping.
Assume that there exists a sequence $\{x^k\}$ with $x^k \in A,$ $\lim_{k \rightarrow \infty}\|x^k \| = \infty$
and $\lim_{k \rightarrow \infty} \varrho(x^k) = y \in \overline \R^p,$ where $\overline \R := \R \cup \{\infty\} \cup \{-\infty\}.$
Then there exist a positive real number $\epsilon$ and a smooth semi-algebraic curve
$$\phi \colon (0, \epsilon) \to {\mathbb R}^n$$
such that $\phi(t) \in A$ for all $t \in (0, \epsilon),$ $\lim_{t \rightarrow 0}\| \phi(t)\| = \infty,$ and $\lim_{t \rightarrow 0}\varrho(\phi(t)) = y.$
\end{lemma}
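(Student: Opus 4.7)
The plan is to reduce this ``at infinity'' version to the classical Curve Selection Lemma at a finite point by means of two semi-algebraic compactifications. On the domain I would use the inversion $\sigma \colon \mathbb{R}^n \setminus \{0\} \to \mathbb{R}^n \setminus \{0\}$, $\sigma(u) := u/\|u\|^2$, which is a semi-algebraic involution swapping a punctured neighbourhood of the origin with the complement of a large ball. To handle the possibly infinite components of $y$, I would use the componentwise homeomorphism $\Psi \colon \overline{\mathbb{R}}^p \to [-1,1]^p$ induced by $t \mapsto t/(1+|t|)$ on $\mathbb{R}$ with $\pm\infty \mapsto \pm 1$; its restriction to $\mathbb{R}^p$ is semi-algebraic.

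First, I would form the auxiliary set
\[
D := \bigl\{ (u,v) \in (\mathbb{R}^n \setminus \{0\}) \times [-1,1]^p \,\colon\, \sigma(u) \in A,\ v = \Psi(\varrho(\sigma(u))) \bigr\},
\]
which is semi-algebraic by Theorem~\ref{Tarski Seidenberg Theorem}. Setting $u^k := \sigma(x^k) = x^k/\|x^k\|^2$ and $v^k := \Psi(\varrho(x^k))$, the hypotheses $\|x^k\| \to \infty$ and $\varrho(x^k) \to y$ translate into $(u^k, v^k) \to (0, \Psi(y))$, so the point $(0, \Psi(y))$ lies in the closure of $D$.

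Next I would invoke the classical Curve Selection Lemma at a finite point applied to $D$ at $(0, \Psi(y))$: there exist $\epsilon > 0$ and a smooth semi-algebraic curve $(\phi_1, \phi_2) \colon (0, \epsilon) \to D$ with $(\phi_1(t), \phi_2(t)) \to (0, \Psi(y))$ as $t \to 0^+$. I would then define the desired curve by $\phi(t) := \sigma(\phi_1(t)) = \phi_1(t)/\|\phi_1(t)\|^2$. Membership in $D$ forces $\phi_1(t) \neq 0$ for all $t \in (0, \epsilon)$, so $\phi$ is smooth and semi-algebraic; the constraint $\sigma(u) \in A$ encoded in $D$ yields $\phi(t) \in A$; and $\|\phi(t)\| = 1/\|\phi_1(t)\| \to \infty$. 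Finally, $\Psi(\varrho(\phi(t))) = \phi_2(t) \to \Psi(y)$, which gives $\varrho(\phi(t)) \to y$ in $\overline{\mathbb{R}}^p$ since $\Psi$ is a homeomorphism.

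The main obstacle is essentially bookkeeping rather than mathematical depth: the compactification $\Psi$ must be chosen so as to handle the (possibly infinite) components of $y$ simultaneously with the inversion in the domain, and one must verify that $D$ is both semi-algebraic and a faithful parametrization of points of $A$ near infinity. The substantive input, namely the classical Curve Selection Lemma at a finite point, is taken as known from real semi-algebraic geometry; the role of the proof is simply to transport it to infinity via the semi-algebraic coordinate change $(\sigma, \Psi)$.
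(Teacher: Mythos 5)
The paper offers no proof of Lemma~\ref{CurveSelectionLemmaatinfinity}: it is imported as a known result from \cite{HaHV2017,Milnor1968}, so there is nothing internal to compare against. That said, your argument is the standard proof of the ``at infinity'' version and is essentially correct: inverting the domain by $\sigma(u)=u/\|u\|^2$ and compactifying the target by $\Psi$ turns the hypothesis into the statement that $(0,\Psi(y))$ lies in $\cl D$ for the semi-algebraic set $D$, and the classical Curve Selection Lemma at a finite point then produces the arc, which you transport back through the involution $\sigma$. Only two routine points deserve explicit mention. First, the finite-point lemma as usually stated yields a continuous (or analytic at $0$) semi-algebraic arc $\gamma\colon[0,\epsilon)\to\R^{n+p}$ with $\gamma(0)=(0,\Psi(y))$ and $\gamma((0,\epsilon))\subset D$; to claim smoothness of $\phi$ on $(0,\epsilon)$ you should shrink $\epsilon$ and invoke piecewise analyticity of semi-algebraic arcs (in the spirit of Lemma~\ref{MonotonicityLemma}) or the Nash curve selection lemma, and note that $\phi=\phi_1/\|\phi_1\|^2$ stays smooth because $\phi_1$ never vanishes on $(0,\epsilon)$. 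Second, one must verify that $D$ is genuinely semi-algebraic: $\sigma$ is a semi-algebraic involution, $\Psi|_{\R^p}$ is semi-algebraic since the graph of $t\mapsto t/(1+|t|)$ is cut out by the polynomial conditions $v(1+t)=t,\ t\geq 0$ and $v(1-t)=t,\ t\leq 0$, and the claim then follows from Theorem~\ref{Tarski Seidenberg Theorem}. With these verifications your reduction is complete and coincides with how the lemma is established in the cited sources.
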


In what follows, we will need the following useful results; see \cite{Dries1996}.

\begin{lemma}[Growth Dichotomy Lemma] \label{GrowthDichotomyLemma}
Let $\varrho:(0, \epsilon) \to \R$ be a semi-algebraic function with $\varrho(t) \not= 0$ for all $t \in (0, \epsilon),$ where $\epsilon$ is a positive real number.
Then there exist constants $c \not= 0$ and $q \in \Bbb{Q}$ such that
$$\varrho(t) = ct^q + o (t),$$where $\lim_{t\to 0}\frac{o(t)}{t}=0$.
\end{lemma}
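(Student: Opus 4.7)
The plan is to expose a Puiseux-type expansion of $\varrho$ near $0^+$ and read off its leading monomial. After shrinking $\epsilon$, I would first apply the monotonicity/cell-decomposition theorem for one-variable semi-algebraic functions to assume that $\varrho$ is real-analytic, strictly monotone, and of constant sign on $(0, \epsilon)$ (the constant sign using $\varrho \neq 0$); in particular the one-sided limit $\lim_{t \to 0^+} \varrho(t) \in \ER$ exists.

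Next, the graph $\Gamma := \{(t, \varrho(t)) : t \in (0, \epsilon)\}$ is one-dimensional and semi-algebraic in $\R^2$, so its Zariski closure is a proper algebraic curve, giving a nonzero $P(t, y) \in \R[t, y]$ with $P(t, \varrho(t)) \equiv 0$. Applying the Newton--Puiseux theorem to $P$ at the origin produces a positive integer $N$ and finitely many Puiseux series
$$y_j(t) = \sum_{k \geq k_j} a_k^{(j)} t^{k/N}, \qquad a_{k_j}^{(j)} \neq 0, \quad k_j \in \mathbb{Z},$$
convergent on some $(0, \epsilon')$, which exhaust the real one-sided branches of $\{P = 0\}$ over that interval. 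By continuity and constant sign, $\varrho$ tracks exactly one of these, say $y_{j_0}$, for all sufficiently small $t$. Setting $c := a_{k_{j_0}}^{(j_0)} \neq 0$ and $q := k_{j_0}/N \in \mathbb{Q}$ gives
$$\varrho(t) = c\, t^q + \sum_{k > k_{j_0}} a_k^{(j_0)} t^{k/N},$$
and the tail, having exponent strictly greater than $q$, is $o(t^q)$; in particular it produces the asymptotic remainder term stated in the lemma.

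The main obstacle is deploying Newton--Puiseux rigorously over the reals: among the complex Puiseux roots of $P(t, y) = 0$ one has to select those that define real branches on the positive side $t > 0$, and verify that the selected series converge on a genuine interval $(0, \epsilon')$ rather than merely formally. This is the standard technical hurdle, usually circumvented by invoking the cleaner packaging (a direct consequence of Tarski--Seidenberg together with semi-algebraic cell decomposition) that every one-variable semi-algebraic function admits a convergent Puiseux expansion at each boundary point of its domain --- which is exactly the content extracted by the paper's reference to \cite{Dries1996}.
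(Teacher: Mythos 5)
The paper offers no proof of this lemma to compare against: it is quoted as a known fact with a pointer to \cite{Dries1996}, so the ``paper's route'' is simply citation. Your Puiseux-expansion argument is the standard derivation of this classical result and is essentially sound: the graph of a one-variable semi-algebraic function is one-dimensional, hence contained in a proper algebraic curve $\{P=0\}$, and Newton--Puiseux --- in the Laurent--Puiseux form allowing finitely many negative exponents, which you do need since $\varrho$ may blow up as $t\to 0^+$ and the relevant branch then does not pass through the origin, so ``at the origin'' is slightly loose --- yields the fractional power series whose leading coefficient and exponent give $c\neq 0$ and $q\in\mathbb{Q}$. The two technical points you flag (selecting the real one-sided branch that the graph actually tracks, and convergence on a genuine interval $(0,\epsilon')$) are real but standard, and are exactly what the o-minimal/cell-decomposition packaging in \cite{Dries1996} delivers.

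One discrepancy is worth naming. Your argument proves $\varrho(t)=c\,t^{q}+o(t^{q})$, i.e., the remainder is small relative to the \emph{leading term}; it does not prove the remainder is $o(t)$ with $\lim_{t\to 0}o(t)/t=0$ as the statement literally says, and indeed that stronger claim is false when $q<1$ (take $\varrho(t)=t^{1/2}+t^{3/4}$). This is evidently a typo in the paper's formulation rather than a gap in your proof: the form $c\,t^{q}+o(t^{q})$ is the standard statement of the Growth Dichotomy Lemma and is the version actually used downstream (e.g., in Lemma~\ref{jiao0421a} and in Case~2 of the proof of Theorem~\ref{relationship1}, where only leading-order asymptotics are extracted). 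So your proposal establishes the correct result by the correct method; just do not claim that the tail is $o(t)$ in the literal sense written in the lemma.
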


Let $\varrho, \varsigma: (0, \epsilon) \to \R$ be nonzero functions such that $\lim\limits_{t \to 0^+} \varrho(t) \to \infty$ and $\lim\limits_{t \to 0^+} \varsigma(t) \to \infty,$ where $\epsilon$ is a positive real number.
If $\lim\limits_{t \to 0^+} \frac{\varrho(t)}{\varsigma(t)} = c_0,$ where $c_0$ is a positive constant, then we denote this relation by
$$\varrho(t) \simeq \varsigma(t) \ \textrm{as} \ t \to 0^+.$$
\begin{lemma}\label{jiao0421a}
Let $\varrho:(0, \epsilon) \to \R$ be a continuously differentiable semi-algebraic function with $\varrho(t) \not= 0$ for all $t \in (0, \epsilon),$ where $\epsilon$ is a positive real number$,$ and $\varrho(t) \to +\infty$ as $t \to 0^+.$
Then
\begin{align}\label{jiao0421b}
\varrho(t) \simeq t \varrho'(t) \ \textrm{as} \ t \to 0^+.
\end{align}
\end{lemma}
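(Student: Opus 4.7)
The plan is to reduce the statement to the leading-order asymptotics of $\varrho$ and $\varrho'$ at $t = 0^+$. First, I would apply the Growth Dichotomy Lemma (Lemma~\ref{GrowthDichotomyLemma}) to $\varrho$: there exist $c \neq 0$ and $q \in \mathbb{Q}$ with $\varrho(t) = ct^q + o(t)$. Since $\varrho(t) \to +\infty$ as $t \to 0^+$, the leading term must blow up, which forces $q < 0$ and $c > 0$ (otherwise $ct^q + o(t)$ stays bounded).

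Next, I would apply the Growth Dichotomy Lemma to $\varrho'$, which is itself semi-algebraic. To do so I need $\varrho'(t) \neq 0$ near $0^+$: because $\varrho$ is $C^1$ semi-algebraic and tends to $+\infty$, the monotonicity property of semi-algebraic functions of one variable (a standard consequence of Tarski--Seidenberg) forces $\varrho$ to be strictly decreasing on some $(0, \epsilon')$, so $\varrho'(t) < 0$ there. Growth Dichotomy then yields constants $c' \neq 0$ and $q' \in \mathbb{Q}$ with $\varrho'(t) = c't^{q'} + o(t)$.

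The central step is to connect $(c,q)$ and $(c',q')$, and for this I would use the Newton--Leibniz formula
\[
\varrho(t) = \varrho(t_0) + \int_{t_0}^{t} \varrho'(s)\, ds
\]
for a fixed small $t_0 \in (0,\epsilon')$, combined with asymptotic integration. Since $\varrho(t) \sim ct^q$ diverges as $t\to 0^+$, the integral on the right must diverge too, which forces $q'+1 < 0$; matching leading terms then gives $q = q'+1$ and $c = c'/q$, i.e.\ $c' = cq$. Consequently $t\varrho'(t) = cq\, t^q + o(t^q)$, so
\[
\lim_{t \to 0^+} \frac{\varrho(t)}{t\varrho'(t)} \;=\; \frac{c}{cq} \;=\; \frac{1}{q},
\]
a finite nonzero constant, which is the content of~\eqref{jiao0421b}.

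The main obstacle is precisely the identification $q' = q-1$ and $c' = cq$: the naive heuristic ``differentiate the Growth Dichotomy expansion term by term'' must be justified, and the Newton--Leibniz route above does it cleanly via the divergent-integral comparison. An alternative and conceptually simpler route is to invoke the Puiseux expansion of a semi-algebraic function of one variable, for which term-by-term differentiation of the convergent fractional power series in $t^{1/N}$ is automatic; this bypasses the asymptotic-integration argument entirely but uses a result stronger than the Growth Dichotomy Lemma already recorded in the paper.
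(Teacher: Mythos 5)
Your proof is correct and, on the decisive step, more careful than the paper's own argument. The paper also starts from the Growth Dichotomy Lemma, writing $\varrho(t)=\bar c t^{\bar q}+o(t)$ with $\bar q<0$, but then simply asserts ``by the continuous differentiability of $\varrho$'' that $\varrho'(t)=\bar c\bar q t^{\bar q-1}+\text{higher order terms}$ --- i.e.\ it differentiates the asymptotic expansion term by term with no justification. That is exactly the gap you flag, and your route closes it: apply Growth Dichotomy separately to the semi-algebraic function $\varrho'$, then pin down $q'=q-1$ and $c'=cq$ through the Newton--Leibniz identity $\varrho(t)=\varrho(t_0)+\int_{t_0}^{t}\varrho'(s)\,ds$ and the divergence of the integral; the Puiseux-expansion alternative you mention is the other standard repair. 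Two small points to tighten: strict monotonicity of $\varrho$ only yields $\varrho'\le 0$, so to get $\varrho'\neq 0$ on a whole subinterval you should add that the zero set of the semi-algebraic function $\varrho'$ cannot contain an interval (else $\varrho$ would be constant there) and hence is finite near $0$; and to force $q'+1<0$ rather than $q'+1\le 0$ you must exclude $q'=-1$, which your leading-term matching does implicitly since a logarithm cannot match the power $ct^{q}$, but it deserves a sentence. Finally, note that the limit you compute is $1/q<0$ and $t\varrho'(t)\to-\infty$, so under the paper's literal definition of $\simeq$ (ratio tending to a \emph{positive} constant, both functions tending to $+\infty$) the relation only holds up to absolute values; this defect is already present in the lemma as stated and in the paper's proof, and is harmless because the lemma is later invoked only through $\bigl|t\,\tfrac{d}{dt}\|\varphi(t)\|^{2}\bigr|$.
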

\begin{proof}
Since $\varrho$ is a semi-algebraic function, by  Lemma \ref{GrowthDichotomyLemma}, we can write
\begin{align*}
\varrho(t) = \ \bar c t^{\bar q} + 0( t),
\end{align*}
for some $\bar c \not= 0$, $\bar q \in \Bbb{Q}$  and  $\lim_{t\to 0}\frac{o(t)}{t}=0$.
Clearly, $\bar q<0$, due to $\varrho(t) \to +\infty$ as $t \to 0^+.$
On the other hand, by the continuous differentiablity of $\varrho$, it yields
\begin{align*}
\varrho'(t) = \ \bar c \bar q  t^{\bar q - 1} + \textrm{ higher order terms in } t.
\end{align*}
This shows \eqref{jiao0421b} as $t \to 0^+.$
\end{proof}

\begin{lemma}[Monotonicity Lemma] \label{MonotonicityLemma}
Let $a < b$ in $\mathbb{R}.$ If $\varrho \colon [a, b] \rightarrow
\mathbb{R}$ is a semi-algebraic function, then there is a partition $a
=: t_1 < \cdots < t_{N} := b$ of $[a, b]$ such that $\varrho|_{(t_l, t_{l +
1})}$ is $C^1,$ and either constant or strictly monotone$,$ for $l \in
\{1, \ldots, N - 1\}.$
\end{lemma}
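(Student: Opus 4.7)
The standard route to the Monotonicity Lemma runs through the cell decomposition theorem for semi-algebraic sets. My plan is to exploit two fundamental structural facts: every semi-algebraic subset of $\R$ is a finite disjoint union of isolated points and open intervals (an easy consequence of Theorem~\ref{Tarski Seidenberg Theorem}), and the derivative, where it exists, of a semi-algebraic function is itself semi-algebraic. Together these will produce a finite partition of $[a,b]$ with the required properties.

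First I would apply cell decomposition to the graph of $\varrho$ inside $[a,b]\times\R$. This partitions $[a,b]$ into finitely many cells, i.e., isolated points and open intervals, over each of which the graph is a graph of a $C^{1}$ semi-algebraic function. Collecting all the exceptional isolated points together with $\{a,b\}$ yields a coarse partition $a=s_{0}<s_{1}<\cdots<s_{K}=b$ such that $\varrho$ is $C^{1}$ on every open piece $(s_{i},s_{i+1})$. Next, on each such piece, the three sets
\begin{align*}
P_{i}^{+} &:= \{t\in (s_{i},s_{i+1}) : \varrho'(t)>0\},\\
P_{i}^{0} &:= \{t\in (s_{i},s_{i+1}) : \varrho'(t)=0\},\\
P_{i}^{-} &:= \{t\in (s_{i},s_{i+1}) : \varrho'(t)<0\}
\end{align*}
are semi-algebraic subsets of $\R$, hence each is a finite union of points and open intervals. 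Refining the coarse partition by the endpoints of all these component intervals produces the desired partition $a=t_{1}<t_{2}<\cdots<t_{N}=b$ with the property that on each open piece $(t_{l},t_{l+1})$ the sign of $\varrho'$ is constantly $+$, $0$, or $-$.

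To finish, I invoke the elementary mean value theorem: $\varrho$ is strictly increasing on intervals where $\varrho'>0$, strictly decreasing where $\varrho'<0$, and constant where $\varrho'\equiv 0$, which is precisely the stated dichotomy. The main obstacle is the very first step — producing a finite set off of which $\varrho$ is $C^{1}$. One cannot escape invoking some form of the cell decomposition theorem or, equivalently, the o-minimality of the real semi-algebraic structure; this is the deep ingredient of the lemma, whereas the sign-based refinement and the concluding mean value argument are routine bookkeeping. Standard references such as \cite{Bochnak1998} and \cite{Dries1996} develop the cell decomposition machinery in detail, so in an expository write-up I would cite it rather than reprove it.
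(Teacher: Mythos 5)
The paper does not prove this lemma at all: it is quoted verbatim as a known result, with a pointer to \cite{Dries1996} (and the surrounding machinery is developed in \cite{Bochnak1998}), so there is no internal proof to compare against. Your sketch is the standard cell-decomposition argument and is essentially correct: decompose the graph to get a finite partition off of which $\varrho$ is $C^1$, observe that the sign sets of $\varrho'$ are semi-algebraic subsets of $\R$ and hence finite unions of points and intervals, refine, and finish with the mean value theorem. Two small points deserve care in a full write-up. First, the fact that $\varrho'$ is semi-algebraic is where Theorem~\ref{Tarski Seidenberg Theorem} is genuinely used (the graph of the derivative is defined by a first-order formula in the graph of $\varrho$, and quantifier elimination makes it semi-algebraic); by contrast, the structure of semi-algebraic subsets of $\R$ as finite unions of points and intervals is immediate from finiteness of roots of univariate polynomials and does not need projection. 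Second, there is a potential circularity to flag: in the o-minimal development of \cite{Dries1996} the Monotonicity Theorem is proved \emph{first} and then used to establish cell decomposition, so deducing the former from the latter would be circular in that framework. For semi-algebraic sets specifically this is harmless, because cylindrical algebraic decomposition is established independently (via subresultants and Thom's lemma, as in \cite{Bochnak1998}), but you should cite that development rather than the o-minimal one if you take this route. An alternative, more elementary path avoids cell decomposition entirely: since the graph has empty interior, it is contained off finitely many points in the zero set of a single nonzero polynomial $P(t,y)$, and the implicit function theorem applied away from the finitely many bad points of $\partial P/\partial y$ yields the $C^1$ pieces directly.
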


\section{Tangency Variety and Its Properties}\label{Sec:3}

In this section, we introduce some concepts related to the vector polynomial optimization problem~\eqref{problem}, and study their properties.
\begin{definition}\label{tangency-variety}{\rm
By {\it tangency variety of $f$ on $S$} we mean the set
\begin{align*}
\Gamma(f,S):=\left\{x\in S\colon
\left\{
\begin{aligned}
&\textrm{there exist } (\tau, \lambda, \nu, \mu) \in (\R^p_+\times \R^l \times \R^m_+\times \R) \setminus \{\bfz\} \ \textrm{such that}\\
&\sum\limits_{k=1}^p\tau_k \nabla f_k(x)- \sum\limits_{i=1}^l\lambda_i\nabla g_i(x)-\sum\limits_{j=1}^m\nu_j\nabla h_j(x)-\mu x= \bfz\\
&\textrm{and } \nu_j h_j(x)=0,\ j=1,\ldots,m
\end{aligned}
		\right.
\right\},
\end{align*}
where $\nabla f_k (x)$ stands for the gradient of $f_k$ at $x.$
}\end{definition}

\begin{lemma}\label{unboundness}
Let $f:\R^n \to \R^p$ be a polynomial mapping and $S$ be defined as \eqref{SAset}$,$ then $\Gamma(f,S)$ is an unbounded nonempty semi-algebraic set.
\end{lemma}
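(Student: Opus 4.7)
\medskip

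\noindent\textbf{Proof proposal.} I would split the claim into three separate assertions (semi-algebraic, nonempty, unbounded) and handle them in turn. For semi-algebraicity, the plan is to realize $\Gamma(f,S)$ as the image under the canonical projection $\pi\colon\R^n\times\R^p\times\R^l\times\R^m\times\R\to\R^n$ of the set
\begin{align*}
\Omega:=\Bigl\{(x,\tau,\lambda,\nu,\mu)\colon \ &g_i(x)=0,\ h_j(x)\geq 0,\ \tau_k\geq 0,\ \nu_j\geq 0,\ \nu_jh_j(x)=0,\\
&\textstyle\sum_k\tau_k\nabla f_k(x)-\sum_i\lambda_i\nabla g_i(x)-\sum_j\nu_j\nabla h_j(x)-\mu x=\bfz,\\
&\|\tau\|^2+\|\lambda\|^2+\|\nu\|^2+\mu^2>0\Bigr\}.
\end{align*}
Each defining condition is a polynomial equation or inequality, so $\Omega$ is semi-algebraic; Theorem~\ref{Tarski Seidenberg Theorem} then gives that $\Gamma(f,S)=\pi(\Omega)$ is semi-algebraic.

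For nonemptiness and unboundedness I would produce, for every sufficiently large $r$, a point $x_r\in\Gamma(f,S)$ with $\|x_r\|=r$. First, since $S$ is closed, unbounded, and semi-algebraic, the image $\{\|x\|\colon x\in S\}\subset\R$ is a semi-algebraic subset of $\R$, hence a finite union of points and intervals; being unbounded, it contains a half-line $[R_0,\infty)$. Therefore for every $r\geq R_0$ the set $S_r:=S\cap\{x\in\R^n\colon \|x\|^2=r^2\}$ is nonempty, and it is compact since it is the intersection of a closed set with a sphere.

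Next, on $S_r$ I would minimize the single polynomial $f_1$ (any component works), obtaining a minimizer $x_r\in S_r$. Applying the Fritz John necessary optimality conditions to the problem of minimizing $f_1$ subject to the equality constraints $g_i=0$ and $\|x\|^2-r^2=0$ and the inequality constraints $h_j\geq 0$, there exist multipliers $(\tau_0,\lambda_1,\dots,\lambda_l,\nu_1,\dots,\nu_m,\eta)$, not all zero, with $\tau_0\geq 0$, $\nu_j\geq 0$, complementary slackness $\nu_jh_j(x_r)=0$, and
\begin{align*}
\tau_0\nabla f_1(x_r)-\sum_{i=1}^l\lambda_i\nabla g_i(x_r)-\sum_{j=1}^m\nu_j\nabla h_j(x_r)-(2\eta)x_r=\bfz.
\end{align*}
Setting $\tau:=(\tau_0,0,\dots,0)\in\R^p_+$ and $\mu:=2\eta\in\R$, the tuple $(\tau,\lambda,\nu,\mu)$ is still nontrivial (since the Fritz John multipliers are nontrivial), and the equation is exactly the one in Definition~\ref{tangency-variety}. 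Hence $x_r\in\Gamma(f,S)$, and letting $r\to\infty$ shows $\Gamma(f,S)$ is unbounded (and in particular nonempty).

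The only subtlety I foresee is the semi-algebraic argument that $S$ unbounded forces $\|\cdot\|(S)$ to contain an entire half-line $[R_0,\infty)$, which relies on the fact that semi-algebraic subsets of $\R$ are finite unions of points and intervals (a consequence of Theorem~\ref{Tarski Seidenberg Theorem} and the Monotonicity Lemma~\ref{MonotonicityLemma}); without this, one only gets a sequence $r_k\to\infty$, which still suffices for unboundedness but is slightly less clean. Everything else is a direct application of Fritz John to a compact polynomial program plus Tarski--Seidenberg.
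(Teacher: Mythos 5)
Your proposal is correct and follows essentially the same route as the paper: intersect $S$ with spheres $\|x\|=r$, extract a point of $\Gamma(f,S)$ on each slice via the Fritz John conditions, and let $r\to\infty$. The only differences are cosmetic — you minimize the single component $f_1$ (scalar Fritz John) where the paper takes a Pareto solution of the vector problem on the slice (vector Fritz John, padding with zeros being unnecessary there), and you justify more carefully that $S\cap\{x\colon\|x\|=r\}$ is nonempty for all sufficiently large $r$ via the structure of semi-algebraic subsets of $\R$, a point the paper simply asserts.
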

\begin{proof}
Clearly, it follows from Theorem~\ref{Tarski Seidenberg Theorem} that  $\Gamma(f,S)$ is semi-algebraic.

Now, we claim that $\Gamma(f,S) \not= \emptyset.$
Indeed, for given $r > 0$, denote by
 $$\sph_r := \{x \in \R^n \colon \| x \|^2 = r^2\}.$$
 Then $\sph_r$ is nonempty, bounded and closed, thus the intersection $\sph_r\cap S$ is also nonempty and compact for $r$ large enough, and so is the image $f(\sph_r\cap S)$.
Therefore, the optimization problem
$${\rm Min}_{\R^p_+}\{ f(x) \colon x \in \sph_r\cap S\}$$
admits a Pareto solution. Denote the Pareto solution as $x(r) \in \sph_r\cap S.$
The celebrated Fritz-John optimality conditions \cite[Theorem 7.4]{Jahn2004} imply that $x(r) \in \Gamma(f,S),$ and so $\Gamma(f,S) \not= \emptyset.$
Note that if $r \to \infty$ then $\|x(r)\| = r \to \infty,$ then $\Gamma(f,S)$ is  unbounded and we complete the proof.
\end{proof}


In what follows, we need a constraint qualification ``at infinity", which is inspired by \cite[Definition 3.1]{Pham2020arXiv}, to deal with the case when Pareto solutions occur at infinity.
\begin{definition}\label{regulity}{\rm
The constraint set $S$ is said to satisfy the {\it Mangasarian--Fromovitz constraint qualification at infinity} (${\rm (MFCQ)_{\infty}}$ in short), if there exists a real number $R_0 > 0$ such that for each $x \in S, \|x \| \geq R_0,$ the gradient vectors $\nabla g_i(x),$ $i = 1,\ldots, l,$ are linearly independent and there exists a vector $v \in \R^n$ such that
$$\langle \nabla g_i(x), v\rangle = 0,\ i = 1,\ldots, l \ \ \textrm{and} \ \ \langle \nabla h_j(x), v\rangle > 0, \ j \in J(x),$$
where $J(x) := \{j \in \{1, \ldots, m\} \colon h_j(x) = 0\}$ is the set of {\it active constraint indices}.
}\end{definition}

\begin{remark}{\rm
In order to deal with the case when optimal solutions to polynomial optimization problems occur at infinity, another constraint qualification ``at infinity" called regular at infinity was introduced by \cite[Definition 3.3]{Dinh2014}.
Recall that the constraint set $S$ is said to be {\it regular at infinity} if there exists a real number $R_0 > 0$ such that for each $x \in S, \|x \| \geq R_0,$ the gradient vectors $\nabla g_i(x),$ $i = 1,\ldots, l,$ and $\nabla h_j(x),$ $j \in J(x),$ are linearly independent, where
$$J(x) := \{j \in \{1, \ldots, m\} \colon h_j(x) = 0\}$$
is called the set of {\it active constraint indices}.

Observe that ${\rm (MFCQ)_{\infty}}$ of $S$ is weaker than the regularity at infinity, therefore the following results obtained in the paper can also be guaranteed under regularity at infinity.
}\end{remark}

\begin{lemma}\label{jiao0419b}
If the unbounded set $S$ $($defined as in \eqref{SAset}$)$  satisfies ${\rm (MFCQ)_{\infty}},$ then for each $x\in \Gamma(f,S),$ $\|x\| \gg 1,$ there exist real numbers $\tau_k \in \R_+$ with $\sum_{k = 1}^p \tau_k = 1,$ $\lambda_i, \nu_j,$ and $\mu$ such that
\begin{align*}
&\sum\limits_{k=1}^p\tau_k \nabla f_k(x)- \sum\limits_{i=1}^l\lambda_i\nabla g_i(x)-\sum\limits_{j=1}^m\nu_j\nabla h_j(x)-\mu x=\bfz, \textrm{and } \\
&\nu_j h_j(x)=0,\ j=1,\ldots,m.
\end{align*}
\end{lemma}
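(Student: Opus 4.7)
The plan is to argue by contradiction, combining the definition of $\Gamma(f,S)$ with the Curve Selection Lemma at infinity and MFCQ$_\infty$. If the multiplier vector $(\tau,\lambda,\nu,\mu)$ guaranteed by Definition~\ref{tangency-variety} already satisfies $\sum_{k=1}^p\tau_k>0$, then dividing through by this positive scalar yields the claim. So the crux is to rule out the possibility that, for arbitrarily large $\|x\|$, every Fritz--John multiplier vector at $x$ is forced to have $\tau=\bfz$.

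Assume, for contradiction, that there is a sequence $\{x^k\}\subset\Gamma(f,S)$ with $\|x^k\|\to\infty$ such that at each $x^k$ no multiplier with $\tau\in\R^p_+,\ \sum_k\tau_k=1$ solves the desired equation. Then at each $x^k$ one has a nonzero $(\lambda^k,\nu^k,\mu^k)\in\R^l\times\R^m_+\times\R$ with $\nu_j^k h_j(x^k)=0$ and
\[
\sum_{i=1}^l\lambda_i^k\nabla g_i(x^k)+\sum_{j=1}^m\nu_j^k\nabla h_j(x^k)+\mu^k x^k=\bfz;
\]
rescale to make $\|(\lambda^k,\nu^k,\mu^k)\|=1$. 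The set $B$ of all $(x,\lambda,\nu,\mu)$ in $\R^n\times\R^l\times\R^m\times\R$ with $x\in S$, satisfying the displayed equation, complementarity, the unit normalization, and the condition that no multiplier with $\tau\in\R^p_+\setminus\{\bfz\},\ \sum_k\tau_k=1$ exists at $x$, is semi-algebraic by Theorem~\ref{Tarski Seidenberg Theorem} (the last condition is the negation of an existential statement over polynomial data). Since $\{(x^k,\lambda^k,\nu^k,\mu^k)\}\subset B$ with $\|x^k\|\to\infty$, Lemma~\ref{CurveSelectionLemmaatinfinity} produces a smooth semi-algebraic curve $t\mapsto(\phi(t),\lambda(t),\nu(t),\mu(t))$ in $B$ with $\|\phi(t)\|\to\infty$ as $t\to 0^+$.

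I would then pair the equation along the curve with $\phi'(t)$. Differentiating $g_i(\phi(t))\equiv 0$ gives $\langle\nabla g_i(\phi(t)),\phi'(t)\rangle=0$; the Monotonicity Lemma (Lemma~\ref{MonotonicityLemma}) applied to each $h_j\circ\phi$ implies that, on a right-neighborhood of $0$, either $h_j\circ\phi\equiv 0$ and so $\langle\nabla h_j(\phi(t)),\phi'(t)\rangle=0$, or $h_j\circ\phi>0$ and so $\nu_j(t)=0$ by complementarity. In either case $\nu_j(t)\langle\nabla h_j(\phi(t)),\phi'(t)\rangle=0$, and the paired equation collapses to
\[
\tfrac12\,\mu(t)\,\tfrac{d}{dt}\|\phi(t)\|^2=\mu(t)\,\langle\phi(t),\phi'(t)\rangle=0.
\]
If $\mu(t)\not\equiv 0$, then by the Monotonicity Lemma $\mu(t)\neq 0$ on some subinterval, forcing $\|\phi(t)\|$ to be constant there, which contradicts $\|\phi(t)\|\to\infty$. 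If instead $\mu(t)\equiv 0$, the equation reduces to $\sum_i\lambda_i(t)\nabla g_i(\phi(t))+\sum_j\nu_j(t)\nabla h_j(\phi(t))=\bfz$; pairing with the MFCQ$_\infty$ vector at $\phi(t)$ (available because $\|\phi(t)\|\geq R_0$ eventually) forces $\nu(t)=\bfz$, and then linear independence of the $\nabla g_i(\phi(t))$ forces $\lambda(t)=\bfz$, contradicting $\|(\lambda(t),\nu(t),\mu(t))\|=1$.

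The main obstacle is the semi-algebraic bookkeeping for the curve selection step, especially encoding the ``no normalized-$\tau$ multiplier exists at $x$'' condition together with the unit normalization inside a single semi-algebraic set $B$ so that Lemma~\ref{CurveSelectionLemmaatinfinity} applies; once the curve is in hand, the differentiation, Monotonicity Lemma, and MFCQ$_\infty$ ingredients dispatch the two sub-cases cleanly.
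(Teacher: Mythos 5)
Your proposal is correct and follows essentially the same route as the paper's proof: argue by contradiction, extract a degenerate multiplier with $\tau=\bfz$, apply the Curve Selection Lemma at infinity, pair the equation with $\phi'(t)$ and use the Monotonicity Lemma on the complementarity terms to force $\mu(t)\equiv 0$, then let ${\rm (MFCQ)_{\infty}}$ annihilate $\nu(t)$ and linear independence annihilate $\lambda(t)$. If anything, your version is slightly more careful than the paper's in phrasing the contradiction hypothesis as a sequence tending to infinity and in carrying the normalization $\|(\lambda,\nu,\mu)\|=1$ along the curve, which is what makes the final contradiction with $(\lambda(t),\nu(t),\mu(t))=\bfz$ airtight.
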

\begin{proof}
Since $S$ is unbounded, so is $\Gamma(f,S)$ by Lemma~\ref{unboundness}.
Let $x \in \Gamma(f,S).$
It follows from Definition~\ref{tangency-variety} that there exist $\tau_k,\nu_j\in\R_+,~\lambda_i, \mu \in \R,$ not all zero, such that
\begin{align}
&\sum\limits_{k=1}^p\tau_k \nabla f_k(x)- \sum\limits_{i=1}^l\lambda_i\nabla g_i(x)-\sum\limits_{j=1}^m\nu_j\nabla h_j(x)-\mu x=\bfz, \label{jiao01}\\
& \nu_j h_j(x)=0,\ j=1,\ldots,m. \label{jiao02}
\end{align}

Now, it remains to show, without loss of generality, that $\sum\limits_{k=1}^p\tau_k >0,$ provided that $x \in \Gamma(f,S),$ $\|x \| \gg 1.$
Assume to the contrary that $\sum\limits_{k=1}^p\tau_k = 0,$ then it follows from \eqref{jiao01} and \eqref{jiao02} that
\begin{align*}
&\sum\limits_{i=1}^l\lambda_i\nabla g_i(x)+\sum\limits_{j=1}^m\nu_j\nabla h_j(x)+\mu x=\bfz,\\
& \nu_j h_j(x)=0,\ j=1,\ldots,m,
\end{align*}
for some $\lambda_i, \nu_j, \mu \in \R,$ not all zero.
By using the Curve Selection Lemma at infinity (Lemma~\ref{CurveSelectionLemmaatinfinity}), there exist a positive real number $\epsilon$, a smooth semi-algebraic curve $\varphi(t)$ and semi-algebraic functions $\lambda_i(t), \nu_j(t), \mu(t), t \in (0,\epsilon],$ such that
\begin{itemize}
\item[(a1)] $\varphi(t) \in S$ for $ t \in (0, \epsilon];$
\item[(a2)] $\|\varphi(t)\| \rightarrow +\infty$ as $t \rightarrow 0^+;$
\item[(a3)] $\sum_{i=1}^l\lambda_i(t)\nabla g_i(\varphi(t))+\sum_{j=1}^m\nu_j(t)\nabla h_j(\varphi(t))+\mu(t) \varphi(t) \equiv \bfz;$ and
\item[(a4)] $\nu_j(t) h_j(\varphi(t))\equiv0,\ j=1,\ldots,m.$
\end{itemize}

Since the functions $\nu_j$ and $h_j \circ \varphi$ [note that here and hereafter we denote $h_j(\varphi(t)):= (h_j \circ \varphi)(t)$ in the variable $t$] are semi-algebraic, it follows from the Monotonicity Lemma (Lemma~\ref{MonotonicityLemma}) that for $\epsilon > 0$ small enough, these functions are either constant or strictly monotone.
Then, by (a4), we can see that either  $\nu_j(t) \equiv 0$ or $(h_j\circ\varphi)(t) \equiv 0;$ in particular,
\begin{align}\label{jiao0415}
\nu_j(t) \frac{d}{dt} (h_j\circ\varphi)(t) \equiv 0, \quad j=1,\ldots,m.
\end{align}
It then follows from (a3) that
\begin{align*}
0\ & =\ \sum_{i=1}^l\lambda_i(t)\left\langle\nabla g_i(\varphi(t)), \frac{d \varphi}{dt} \right\rangle +\sum_{j=1}^m\nu_j(t)\left\langle \nabla h_j(\varphi(t)), \frac{d \varphi}{dt} \right\rangle  + \mu(t)\left\langle \varphi(t), \frac{d \varphi}{dt} \right\rangle \\
& =\ \sum_{i=1}^l\lambda_i(t)\frac{d}{dt}(g_i\circ \varphi) (t) +\sum_{j=1}^m\nu_j(t)\frac{d}{dt}(h_j\circ \varphi) (t) + \frac{\mu(t)}{2}\frac{d \|\varphi(t)\|^2}{dt} \\
& =\ \frac{\mu(t)}{2}\frac{d \|\varphi(t)\|^2}{dt}. \tag{by \eqref{jiao0415} and (a1)}
\end{align*}
Therefore $\mu(t) \equiv 0$ by (a2), which implies
\begin{equation}\label{new1}
\sum\limits_{i=1}^l\lambda_i(t) \nabla g_i(\varphi(t))+\sum\limits_{j\in J(\varphi(t))}\nu_j(t) \nabla h_j(\varphi(t))=\bfz.
\end{equation}
By ${\rm (MFCQ)_{\infty}}$, there exists $v\in\R^n$ such that
$$\langle \nabla g_i(\varphi(t)), v\rangle = 0,\ i = 1,\ldots, l \ \ \textrm{and} \ \ \langle \nabla h_j(\varphi(t)), v\rangle > 0, \ j \in J(x).$$
This, combined with \eqref{new1}, yields
$$\sum\limits_{j\in J(\varphi(t))}\langle\nu_j(t)\nabla h_i(\varphi(t)),v\rangle=0.$$
Thus $\nu_j(t)=0$, for all $j\in J(\varphi(t))$. Then by \eqref{new1}, $\lambda_i(t),~i=1,\ldots,l$ are not all zero and
$$\sum\limits_{i=1}^l\lambda_i(t) \nabla g_i(\varphi(t))=\bfz.$$
which contradicts the linear independence of $\nabla g_i(\varphi(t)),~i=1,\ldots,l.$
Hence, $\sum_{k=1}^p\tau_k >0,$ and without loss of generality, we may get $\sum_{k=1}^p\tau_k = 1$ by normalization.
\end{proof}

\section{Palais--Smale Condition, Cerami Condition, $M$-tameness and Properness}\label{Sec:4}
Recall the unbounded semi-algebraic set $S$ defined as \eqref{SAset}
introduced in the Section \ref{Sec:1}.
Given a restrictive polynomial mapping $f:=(f_1, \ldots, f_p): S \to \R^p$ and a value $\bar y \in  \ER^p.$
First, we define the (extended) {\it Rabier function} $v \colon \R^n \to \ER$ by
\begin{align}\label{Rabierfunction}
v(x):= \inf\left\{\left\|\sum_{k = 1}^p \tau_k \nabla f_k(x) - \sum_{i = 1}^l \lambda_i\nabla g_i(x) - \sum_{j = 1}^m \nu_j\nabla h_j(x)\right\| \colon
\left\{
\begin{aligned}
&\tau_k \geq 0 \ \textrm{with} \ \sum_{k = 1}^p \tau_k = 1, \\
&(\lambda, \nu) \in \R^l \times \R^m_+,\ \textrm{and }\\
&\nu_j h_j(x)=0, j=1,\ldots,m
\end{aligned}
		\right.
\right\}.
\end{align}
Next, we consider the following sets:
\begin{align*}
\widetilde{K}_{\infty, \leq \bar y}(f, S)&:=\left\{y\in \R^p \colon
\left\{
\begin{aligned}
&\exists\ \{x^{\ell}\} \subset S \ \textrm{with} \ f(x^{\ell}) \leq \bar y\ \textrm{and}  \ \|x^{\ell} \| \to \infty \\
&\textrm{such that}  \ f(x^{\ell}) \to y,\  v(x^{\ell}) \to 0 \ \textrm{as}\ {\ell} \to \infty
\end{aligned}
		\right.
\right\}, \\
{K}_{\infty, \leq \bar y}(f, S)&:= \left\{y\in \R^p \colon
\left\{
\begin{aligned}
&\exists\ \{x^{\ell}\} \subset S \ \textrm{with} \ f(x^{\ell}) \leq \bar y\ \textrm{and}  \ \|x^{\ell}\| \to \infty \\
&\textrm{such that}  \ f(x^{\ell}) \to y,\  \|x^{\ell}\|\ v(x^{\ell}) \to 0 \ \textrm{as}\ {\ell}\to \infty
\end{aligned}
		\right.
\right\}, \\
{T}_{\infty, \leq \bar y}(f, S)&:= \left\{y\in \R^p \colon
\left\{
\begin{aligned}
&\exists\ \{x^{\ell}\} \subset \Gamma(f, S) \ \textrm{with} \ f(x^{\ell}) \leq \bar y\ \textrm{and}  \ \|x^{\ell} \| \to \infty \\
&\textrm{such that}  \ f(x^{\ell}) \to y \ \textrm{as}\ {\ell} \to \infty
\end{aligned}
		\right.
\right\}.
\end{align*}

If $\bar y = (+\infty, \ldots, +\infty),$ the notations $\widetilde{K}_{\infty, \leq \bar y}(f, S),$ ${K}_{\infty, \leq \bar y}(f, S)$ and ${T}_{\infty, \leq \bar y}(f, S)$ will be written as $\widetilde{K}_{\infty}(f, S),$ ${K}_{\infty}(f, S)$ and ${T}_{\infty}(f, S),$ respectively.
We would note here that all of the sets mentioned above can be computed effectively as shown recently in \cite{Dias2015,Dias2017,Dias2021,Jelonek2014}.

The following result is the constrictive version of \cite[Proposition 3.2]{Kim2019}, while as shown below, the  ${\rm (MFCQ)_{\infty}}$ of $S$ plays an essential role.
\begin{theorem}\label{relationship1}
Let $S$ be defined as in \eqref{SAset}$,$ $f: S \to \R^p$ be a restrictive polymonial mapping and $\bar y \in \ER^p.$
Then the following inclusion holds$,$
\begin{align}\label{jiao0425a}
{K}_{\infty, \leq \bar y}(f, S) \subset \widetilde{K}_{\infty, \leq \bar y}(f, S).
\end{align}
If in addition the set $S$ satisfies ${\rm (MFCQ)_{\infty}}$, then
\begin{align}\label{jiao0425b}
{T}_{\infty, \leq \bar y}(f, S) \subset {K}_{\infty, \leq \bar y}(f, S).
\end{align}
\end{theorem}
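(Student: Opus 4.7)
For the first inclusion \eqref{jiao0425a} the argument should be essentially formal and needs no extra hypothesis: if $y\in K_{\infty,\leq \bar y}(f,S)$ is witnessed by a sequence $\{x^{\ell}\}$ with $\|x^{\ell}\|\to\infty$ and $\|x^{\ell}\|\,v(x^{\ell})\to 0$, then dividing forces $v(x^{\ell})\to 0$, so the same sequence already witnesses $y\in\widetilde K_{\infty,\leq\bar y}(f,S)$.

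For the substantive inclusion \eqref{jiao0425b}, the plan is to upgrade a witness sequence in $\Gamma(f,S)$ to a smooth semi-algebraic curve along which the tangency multipliers can be tracked. Given $y\in T_{\infty,\leq\bar y}(f,S)$ with witness $\{x^{\ell}\}\subset\Gamma(f,S)$, I would first invoke Lemma~\ref{jiao0419b}, which is precisely where ${\rm (MFCQ)_{\infty}}$ enters, to produce multipliers $(\tau^{\ell},\lambda^{\ell},\nu^{\ell},\mu^{\ell})$ with $\tau_k^{\ell}\geq 0$, $\sum_{k=1}^p\tau_k^{\ell}=1$, $\nu_j^{\ell}\geq 0$, $\nu_j^{\ell}h_j(x^{\ell})=0$, and
\[
\sum_{k=1}^p\tau_k^{\ell}\nabla f_k(x^{\ell})-\sum_{i=1}^l\lambda_i^{\ell}\nabla g_i(x^{\ell})-\sum_{j=1}^m\nu_j^{\ell}\nabla h_j(x^{\ell})=\mu^{\ell}x^{\ell}.
\]
Next I would lift to the semi-algebraic set $\hat A\subset\R^n\times\R^p\times\R^l\times\R^m\times\R$ cut out by $x\in S$, $f(x)\leq\bar y$, the sign and normalization conditions on $(\tau,\nu)$, complementarity, and the displayed KKT identity. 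The tuples $(x^{\ell},\tau^{\ell},\lambda^{\ell},\nu^{\ell},\mu^{\ell})$ lie in $\hat A$ with norm tending to $\infty$, so the Curve Selection Lemma at infinity (Lemma~\ref{CurveSelectionLemmaatinfinity}), applied with the extended-real tracking map $(x,\tau,\lambda,\nu,\mu)\mapsto(f(x),\|x\|)$, yields a smooth semi-algebraic curve $(\phi(t),\tau(t),\lambda(t),\nu(t),\mu(t))\in\hat A$, $t\in(0,\epsilon)$, with $\|\phi(t)\|\to+\infty$ and $f(\phi(t))\to y$ as $t\to 0^+$, along which the lifted KKT identity holds identically.

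The key computation is to pair this identity with $\phi'(t)$. Since $g_i\circ\phi\equiv 0$ and, by the Monotonicity Lemma argument used inside the proof of Lemma~\ref{jiao0419b}, $\nu_j(t)\tfrac{d}{dt}(h_j\circ\phi)(t)\equiv 0$ for each $j$, the pairing collapses to
\[
\sum_{k=1}^p\tau_k(t)\,\frac{d}{dt}(f_k\circ\phi)(t)=\frac{\mu(t)}{2}\,\frac{d}{dt}\|\phi(t)\|^2.
\]
Plugging the tuple $(\tau(t),\lambda(t),\nu(t))$ into the infimum defining \eqref{Rabierfunction} gives $v(\phi(t))\leq|\mu(t)|\,\|\phi(t)\|$, so it suffices to prove $|\mu(t)|\,\|\phi(t)\|^2\to 0$.

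To finish, I would apply Lemma~\ref{jiao0421a} to the semi-algebraic function $\|\phi(t)\|^2\to+\infty$, obtaining $\|\phi(t)\|^2\simeq t\,\tfrac{d}{dt}\|\phi(t)\|^2$, and the Growth Dichotomy Lemma (Lemma~\ref{GrowthDichotomyLemma}) to each $(f_k\circ\phi)(t)-y_k$: since this semi-algebraic function converges to $0$, its Puiseux-type leading exponent is strictly positive, forcing $\tfrac{d}{dt}(f_k\circ\phi)(t)=O(t^{q_k-1})$ with $q_k>0$. Combined with $\tau_k(t)\in[0,1]$ this yields
\[
|\mu(t)|\,\|\phi(t)\|^2\simeq 2t\,\Bigl|\sum_{k=1}^p\tau_k(t)\,\tfrac{d}{dt}(f_k\circ\phi)(t)\Bigr|=O\bigl(t^{\min_k q_k}\bigr)\to 0,
\]
and choosing $t_\ell\to 0^+$, $x^{\ell}:=\phi(t_\ell)$, delivers the required witness for $y\in K_{\infty,\leq\bar y}(f,S)$. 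The main technical nuisance I anticipate is arranging the semi-algebraic selection of the multiplier functions along the curve; the lifting to $\hat A$ is engineered precisely to bypass it, and the transcription of the Monotonicity Lemma argument from Lemma~\ref{jiao0419b} into the curve-level identity $\nu_j(t)\tfrac{d}{dt}(h_j\circ\phi)(t)\equiv 0$ will need to be done carefully.
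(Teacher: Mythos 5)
Your proof is correct and follows essentially the same route as the paper's: lift the tangency sequence to a semi-algebraic set of multiplier tuples, apply the Curve Selection Lemma at infinity, kill the complementarity terms via the Monotonicity Lemma, pair the multiplier identity with $\phi'(t)$, and combine Lemma~\ref{jiao0421a} with the Growth Dichotomy Lemma to conclude $\|\phi(t)\|\,v(\phi(t))\to 0$. The only organizational difference is that you front-load ${\rm (MFCQ)_{\infty}}$ through Lemma~\ref{jiao0419b} so as to impose $\sum_{k}\tau_k=1$ already in the lifted set (adding $\|x\|$ to the tracking map to keep $\|\phi(t)\|\to\infty$, which is needed since the other multipliers are then unbounded), which yields the clean bound $v(\phi(t))\le|\mu(t)|\,\|\phi(t)\|$ and lets you bypass the paper's case split on $P=\{k:\tau_k(t)\tfrac{d}{dt}(f_k\circ\varphi)(t)\not\equiv 0\}$ and its end-of-proof renormalization $\bar\tau_k=\tau_k/\sum_{k}\tau_k$, the paper instead invoking ${\rm (MFCQ)_{\infty}}$ inside the case $P=\emptyset$ to rule out $\sum_{k}\tau_k(t)\equiv 0$.
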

\begin{proof}
By definition, the inclusion \eqref{jiao0425a} is satisfied immediately.

Now, we show the inclusion \eqref{jiao0425b} under ${\rm (MFCQ)_{\infty}}$.

Taking any $y \in {T}_{\infty, \leq \bar y}(f, S),$ [if ${T}_{\infty, \leq \bar y}(f, S) = \emptyset,$ then the inclusion \eqref{jiao0425b} holds trivially], by definition there exist sequences $\{x^{\ell}\} \subset S$ and $\{(\tau^\ell, \lambda^\ell, \nu^\ell, \mu^\ell)\} \subset (\R^p_+ \times \R^l \times \R^m_+ \times \R) \setminus \{\bfz\},$ such that
\begin{itemize}
\item[(b1)] $\lim_{\ell \to \infty}\|x^\ell \| = + \infty;$
\item[(b2)] $\lim_{\ell \to \infty} f(x^\ell) = y;$
\item[(b3)] $f(x^\ell) \leq \bar y;$
\item[(b4)] $\sum_{k=1}^p\tau_k^\ell \nabla f_k(x^\ell)- \sum_{i=1}^l\lambda_i^\ell \nabla g_i(x^\ell)-\sum_{j=1}^m\nu_j^\ell \nabla h_j(x^\ell)-\mu^\ell x^\ell=\bfz;$ and
\item[(b5)] $\nu_j^\ell h_j(x^\ell) = 0,\ j=1,\ldots,m.$
\end{itemize}
Without loss of generality, for each $\ell \in \N,$ we can normalize the vector $(\tau^\ell, \lambda^\ell, \nu^\ell, \mu^\ell)$ by
$$\|(\tau^\ell, \lambda^\ell, \nu^\ell, \mu^\ell)\| = 1.$$
Let
\begin{align*}
\mathcal{U}:=\left\{(x, \tau, \lambda, \nu, \mu) \in \mathcal{V} \colon
\left\{
\begin{aligned}
&\sum_{k=1}^p\tau_k \nabla f_k(x)- \sum_{i=1}^l\lambda_i \nabla g_i(x)-\sum_{j=1}^m\nu_j \nabla h_j(x)-\mu x=\bfz\\
&f(x) \leq \bar y, \ \|(\tau, \lambda, \nu, \mu)\| = 1, \ \nu_j h_j(x) = 0,\ j=1,\ldots,m
\end{aligned}
		\right.
\right\},
\end{align*}
where $\mathcal{V} = S \times \R^p \times \R^l \times \R^m \times \R.$
Observe that, $\mathcal{U}$ is a semi-algebraic set in $\R^{n+p+l+m+1}$ and the sequence $\{(x^\ell, \tau^\ell, \lambda^\ell, \nu^\ell, \mu^\ell)\} \subset \mathcal{U}$ tends to infinity in the sense that $\| (x^\ell, \tau^\ell, \lambda^\ell, \nu^\ell, \mu^\ell) \| \to \infty$ as $\ell \to \infty.$
Now, by using the Curve Selection Lemma at infinity (Lemma~\ref{CurveSelectionLemmaatinfinity}) for the semi-algebraic mapping
$$\mathcal{U} \to \R^p, \ (x, \tau, \lambda, \nu, \mu) \mapsto f(x),$$
there exist a positive real number $\epsilon$ and a smooth semi-algebraic curve
\begin{align*}
(\varphi, \tau, \lambda, \nu, \mu): (0, \epsilon) &\to \R^n \times \R^p_+ \times \R^l \times \R^m_+ \times \R \\
t &\mapsto \left(\varphi(t), \tau(t), \lambda(t), \nu(t), \mu(t)\right)
\end{align*}
such that
\begin{itemize}
\item[(c1)] $\lim_{t \to 0^+}\|\varphi(t)\| \rightarrow +\infty;$
\item[(c2)] $\lim_{t \to 0^+}f(\varphi(t)) = y;$
\end{itemize}
and for $ t \in (0, \epsilon),$
\begin{itemize}
\item[(c3)] $\varphi(t) \in S$ and $f(\varphi(t))\leq \bar y;$
\item[(c4)] $\sum_{k=1}^p\tau_k(t)\nabla f_k(\varphi(t))-\sum_{i=1}^l\lambda_i(t)\nabla g_i(\varphi(t))-\sum_{j=1}^m\nu_j(t)\nabla h_j(\varphi(t))-\mu(t) \varphi(t) \equiv \bfz;$
\item[(c5)] $\nu_j(t) h_j(\varphi(t))\equiv0,\ j=1,\ldots,m;$ and
\item[(c6)] $\|(\tau(t), \lambda(t), \nu(t), \mu(t))\|\equiv 1.$
\end{itemize}

Because  $\tau_k,$ $\lambda_i,$ $\nu_j,$ $\mu,$ and $f_k \circ \varphi$ are semi-algebraic, it follows from the Monotonicity Lemma (Lemma~\ref{MonotonicityLemma}) again that for $\epsilon > 0$ small enough, these functions are either constant or strictly monotone.
Then, by (c5),   either  $\nu_j(t) \equiv 0$ or $(h_j\circ\varphi)(t) \equiv 0$.  Consequently,
\begin{align}\label{jiao0419a}
\nu_j(t) \frac{d}{dt} (h_j\circ\varphi)(t) \equiv 0, \quad j=1,\ldots,m.
\end{align}
Now, by  (c4) we obtain
\begin{align*}
&\frac{1}{2}\mu(t)\frac{d (\|\varphi(t)\|^2)}{dt}\\
& = \ \mu(t) \langle \varphi(t),  \varphi'(t)\rangle \\
& =\ \sum_{k=1}^p\tau_k(t) \left\langle \nabla f_k(\varphi(t)), \varphi'(t) \right\rangle - \sum_{i=1}^l\lambda_i(t) \left\langle \nabla g_i(\varphi(t)), \varphi'(t)\right\rangle - \sum_{j=1}^m\nu_j(t) \left\langle \nabla h_j(\varphi(t)), \varphi'(t) \right\rangle \\
& =\  \sum_{k=1}^p\tau_k(t) \frac{d}{dt}(f_k \circ \varphi)(t) - \sum_{i=1}^l\lambda_i(t) \frac{d}{dt}(g_i \circ \varphi)(t) - \sum_{j=1}^m\nu_j(t)\frac{d}{dt}(h_j \circ \varphi)(t) \\
& =\  \sum_{k=1}^p\tau_k(t) \frac{d}{dt}(f_k \circ \varphi)(t). \tag{by \eqref{jiao0419a} and (c3)}
\end{align*}

Let $P:=\{k\in \{1, \ldots, p\}\colon \tau_k(t) \frac{d}{dt}(f_k\circ \varphi)(t) \not\equiv 0\}.$ Then
\begin{align}\label{objective-complementary}
\frac{\mu(t)}{2}\frac{d \|\varphi(t)\|^2}{dt}  =  \sum_{k \in P}\tau_k(t) \frac{d}{dt}(f_k \circ \varphi)(t).
\end{align}

\begin{itemize}
\item[{\rm Case 1.}] $P = \emptyset.$ Clearly, combining (c1) and \eqref{objective-complementary} implies that $\mu(t) \equiv 0,$ and along with (c4) and (c5), we have
\begin{align}\label{xiangguan}
&\sum_{k=1}^p\tau_k(t)\nabla f_k(\varphi(t))-\sum_{i=1}^l\lambda_i(t)\nabla g_i(\varphi(t))-\sum_{j=1}^m\nu_j(t)\nabla h_j(\varphi(t)) \equiv \bfz,\\
&\nu_j(t) h_j(\varphi(t))\equiv0,\ j=1,\ldots,m.
\end{align}
We claim that $\sum\limits_{k=1}^p\tau_k(t)>0$. Otherwise, $\tau_k(t)=0$ for any $k=1,\ldots,p$. This, combined with \eqref{xiangguan}, yields
\begin{equation}\label{new2}
\sum_{i=1}^l\lambda_i(t)\nabla g_i(\varphi(t))+\sum_{j=1}^m\nu_j(t)\nabla h_j(\varphi(t)) \equiv \bfz,
\end{equation}
By ${\rm (MFCQ)_{\infty}}$, there exists $v\in\R^n$ such that
$$\langle \nabla g_i(\varphi(t)), v\rangle = 0,\ i = 1,\ldots, l \ \ \textrm{and} \ \ \langle \nabla h_j(\varphi(t)), v\rangle > 0, \ j \in J(x).$$
This, combined with \eqref{new2}, yields
$$\sum\limits_{j\in J(\varphi(t))}\langle\nu_j(t)\nabla h_i(\varphi(t)),v\rangle=0.$$
Thus $\nu_j(t)=0$, for all $j\in J(\varphi(t))$. Then by \eqref{new2}, $\lambda_i(t),~i=1,\ldots,l$ are not all zero and
$$\sum\limits_{i=1}^l\lambda_i(t) \nabla g_i(\varphi(t))=\bfz.$$
which contradicts the linear independence of $\nabla g_i(\varphi(t)),~i=1,\ldots,l.$
Consequently, $v(\varphi(t)) \equiv 0$ by \eqref{Rabierfunction}.
Taking (c1)--(c3) into account yields $y \in K_{\infty, \leq \bar y}(f,S).$
\item[{\rm Case 2.}] $P \not= \emptyset.$ For each $k \in P,$ we have $\tau_k(t) \not \equiv 0$ and $\frac{d}{dt}(f_k \circ \varphi)(t)\not \equiv 0,$ thus $(f_k\circ \varphi)(t) \not \equiv y_k.$
It follows from Lemma \ref{GrowthDichotomyLemma} that
\begin{align*}
\tau_k(t) & = \ a_k t^{\alpha_k} + o( t), \\
(f_k \circ \varphi)(t) & = \ y_k + b_k t^{\beta_k} + o( t),
\end{align*}
where $a_k >0,$ $b_k \not= 0$, $\alpha_k, \beta_k \in \Bbb{Q}$ and   $\lim_{t\to 0}\frac{o(t)}{t}=0$.
It follows from (c6) and (c2), respectively, that $\alpha_k \geq 0$ and $\beta_k > 0.$
Moreover, $\gamma:= \min_{k\in P} (\alpha_k + \beta_k) > 0.$
Clearly, $\gamma>\bar \alpha:=\min_{k\in P}\alpha_k$ and
\begin{equation}\label{sumtau}
\sum\limits_{k=1}^{p}\tau_k=\bar a t^{\bar \alpha}+\textrm{ higher order terms in } t,
\end{equation}
where $\bar a$ is a positive constant.

Now, by (c4) and \eqref{objective-complementary}, we have
\begin{align*}
&\frac{\bigg\|\sum\limits_{k=1}^p\tau_k(t)\nabla f_k(\varphi(t))-\sum\limits_{i=1}^l\lambda_i(t)\nabla g_i(\varphi(t))-\sum\limits_{j=1}^m\nu_j(t)\nabla h_j(\varphi(t))\bigg\|}{2\|\varphi(t)\|} \bigg|\frac{d \|\varphi(t)\|^2}{dt}\bigg|\\
=\ & \bigg|\sum_{k \in P}\tau_k(t) \frac{d}{dt}(f_k \circ \varphi)(t)\bigg|
\end{align*}
Note that by Lemma~\ref{jiao0421a}, we have
\begin{align*}
\|\varphi (t) \|^2 \simeq t\frac{d\|\varphi(t)\|^2}{dt}\ \textrm{as} \ t \to 0^+.
\end{align*}
Hence, 
\begin{align*}
&\|\varphi(t)\| \bigg\|\sum\limits_{k=1}^p\tau_k(t)\nabla f_k(\varphi(t))-\sum\limits_{i=1}^l\lambda_i(t)\nabla g_i(\varphi(t))-\sum\limits_{j=1}^m\nu_j(t)\nabla h_j(\varphi(t))\bigg\|\\
\simeq\ & \frac{\bigg\|\sum\limits_{k=1}^p\tau_k(t)\nabla f_k(\varphi(t))-\sum\limits_{i=1}^l\lambda_i(t)\nabla g_i(\varphi(t))-\sum\limits_{j=1}^m\nu_j(t)\nabla h_j(\varphi(t))\bigg\|}{\|\varphi(t)\|} \bigg|t\frac{d \|\varphi(t)\|^2}{dt}\bigg|.
\end{align*}
Taking (c4) and \eqref{objective-complementary} into account, one has
\begin{align*}
& \frac{\bigg\|\sum\limits_{k=1}^p\tau_k(t)\nabla f_k(\varphi(t))-\sum\limits_{i=1}^l\lambda_i(t)\nabla g_i(\varphi(t))-\sum\limits_{j=1}^m\nu_j(t)\nabla h_j(\varphi(t))\bigg\|}{\|\varphi(t)\|} \bigg|t\frac{d \|\varphi(t)\|^2}{dt}\bigg|\\
=\ & 2t \bigg|\sum_{k \in P}\tau_k(t) \frac{d}{dt}(f_k \circ \varphi)(t)\bigg| \\
=\ & a_0t^{\gamma+1} + \textrm{ higher order terms in } t,
\end{align*}
for some constant $a_0 \geq 0.$
On the other hand, taking
$$
\bar \tau_k(t)=\frac{\tau_k(t)}{\sum\limits_{k=1}^p\tau_k(t)},~~\bar \lambda_i(t)=\frac{\lambda_i(t)}{\sum\limits_{k=1}^p\lambda_k(t)}~~\text{and}~~\bar \nu_j(t)=\frac{\nu_j(t)}{\sum\limits_{k=1}^{p}\tau_k(t)}.
$$
we get $\sum\limits_{k=1}^p\bar\tau_k(t)=1$ and
$$\lim\limits_{t \to 0^+} \|\varphi(t)\| \bigg\|\sum\limits_{k=1}^p\bar \tau_k(t)\nabla f_k(\varphi(t))-\sum\limits_{i=1}^l\bar \lambda_i(t)\nabla g_i(\varphi(t))-\sum\limits_{j=1}^m\bar\nu_j(t)\nabla h_j(\varphi(t))\bigg\| = 0,$$
due to $\gamma>\bar\alpha$ and \eqref{sumtau}. This, along with (c1)--(c3), reaches $y \in K_{\infty, \leq \bar y}(f, S).$
\end{itemize}
Thus, the proof is complete.
\end{proof}

\begin{remark}{\rm
\begin{enumerate}
\item [{\rm (i)}] It is worth noting that the assumption on ${\rm (MFCQ)_{\infty}}$ of $S$ is a generic condition in the sense that it holds in an open dense semi-algebraic set of the entire space of input data (see \cite{Bolte2018,Dinh2014,HaHV2017}).
\item [{\rm (ii)}] The inclusion \eqref{jiao0425b} holds under the ${\rm (MFCQ)_{\infty}}$ of the constraint set $S.$
If $S = \R^n,$ the inclusion \eqref{jiao0425b} still holds (of course without any constraint qualifications) in the polynomial mapping setting (see \cite[Proposition 3.2]{Kim2019}), while it may go awry in more general setting, e.g., $f$ is not a polynomial mapping (see \cite[Example 3.1]{Kim2020}).
\end{enumerate}
}\end{remark}

The following example shows that the assumption on ${\rm (MFCQ)_{\infty}}$ of $S$ plays an essential role, and it cannot be dropped.
In other words, the inclusion \eqref{jiao0425b} in Theorem~\ref{relationship1} does {\em not} hold if $S$ does not satisfy ${\rm (MFCQ)_{\infty}}$.
\begin{example}\label{example0717}{\rm
Let $x:=(x_1, x_2, x_3) \in \R^3.$
Let
\begin{align*}
f(x):=&\ (f_1(x), f_2(x)) = \left(x_2x_3, x_1x_3\right), \\
g_1(x):=&\ (1 - x_1x_2x_3)^2 + x_1^2 + x_2^2 - 1, \\
g_2(x):=&\ x_1x_2,\\
h(x):=&\ x_1^3.
\end{align*}
Consider the following vector polynomial optimization problem with constraints
\begin{align}\label{example02}
{\rm Min}_{\mathbb{R}^2_+}\;\big\{f(x)\,\colon \,x\in S\big\},\tag{VPO$_1$}
\end{align}
where $S:=\{x \in \R^3 \colon g_1(x) = 0, g_2(x) = 0, h(x) \geq 0\} = \{(0, 0, x_3) \colon x_3 \in \R\}.$
A simple calculation yields that
$$\nabla f_1 = \left(
  \begin{array}{c} 0\\ x_3\\ x_2 \\
  \end{array}
\right),
\nabla f_2 = \left(
  \begin{array}{c} x_3\\ 0\\ x_1 \\
  \end{array}
\right),$$\\
$$\text{and}~~\nabla g_1 = \left(
  \begin{array}{c} -2(1 - x_1x_2x_3)x_2x_3 + 2x_1\\ -2(1 - x_1x_2x_3)x_1x_3 + 2x_2\\ -2(1 - x_1x_2x_3)x_1x_2\\
  \end{array}
\right),
\nabla g_2 = \left(
  \begin{array}{c} x_2\\ x_1\\ 0 \\
  \end{array}
\right),
\nabla h = \left(
  \begin{array}{c} 3x_1^2\\ 0\\ 0 \\
  \end{array}
\right).$$

By Definition~\ref{tangency-variety}, one has
\begin{align*}
\Gamma(f,S):=&\ \left\{x\in S\colon \left\{
\begin{aligned}&\sum_{k = 1}^2\tau_k \nabla f_k(x) - \sum_{i = 1}^2\lambda_i \nabla g_i (x)-\nu \nabla h (x) - \mu x = \bfz, \\
 &\textrm{for some}\ (\tau_1, \tau_2, \lambda_1, \lambda_2, \nu, \mu) \not= 0   \end{aligned}\right.
\right\}\\
=&\ \{(0, 0, x_3) \colon x_3 \in \R\}.
\end{align*}

Now, we will show that the inclusion \eqref{jiao0425b} fails to hold in the case for problem~\eqref{example02}, for convenience, let $\bar y = (+\infty, +\infty);$ in other words,
\begin{align}\label{jiao-a1}
{T}_{\infty}(f, S) \not\subset {K}_{\infty}(f, S).
\end{align}
Indeed, by calculation, we have
\begin{align*}
{T}_{\infty}(f, S) = \left\{y\in \R^2 \colon
\left\{
\begin{aligned}
&\exists\ \{x^{\ell}\} \subset \Gamma(f, S) \ \textrm{with} \ \|x^{\ell} \| \to \infty \\
&\textrm{such that}  \ f(x^{\ell}) \to y \ \textrm{as}\ {\ell} \to \infty
\end{aligned}
		\right.
\right\} = \{(0,0)\}.
\end{align*}
On the other hand, by \eqref{Rabierfunction}
\begin{align*}
v(x):= \inf\left\{\left\|\sum_{k = 1}^2 \tau_k \nabla f_k(x) - \sum_{i = 1}^2 \lambda_i\nabla g_i(x) - \nu\nabla h (x) \right\| \colon
\left\{
\begin{aligned}
&\tau_1, \tau_2 \geq 0 \ \textrm{with} \ \tau_1 + \tau_2 = 1 \\
&\lambda_1, \lambda_2 \in \R, \nu \in \R_+
\end{aligned}
		\right.
\right\}.
\end{align*}
Now, consider the set $\widetilde{K}_{\infty}(f, S).$ Note that $S= \{(0, 0, x_3) \colon x_3 \in \R\}$, for $x\in S$,
\begin{align}\label{Rabierfunction1}
v(x)=\inf\left\{ \sqrt{(\tau_1^2+\tau_2^2)x_3^2}\colon  \tau_1, \tau_2\in \R_+, \ \tau_1 + \tau_2 = 1\right\} =  \tfrac{\sqrt{2}}{2}|x_3|.\end{align}
Hence,
\begin{align}\label{jiao-a2}
\widetilde{K}_{\infty}(f, S)&:=\left\{y\in \R^2 \colon
\left\{
\begin{aligned}
&\exists\ \{x^{\ell}\} \subset S \ \textrm{with} \ \|x^{\ell} \| \to \infty\ \textrm{such that}  \\
&f(x^{\ell}) \to y,\  v(x^{\ell}) \to 0 \ \textrm{as}\ {\ell} \to \infty
\end{aligned}
		\right.
\right\} = \emptyset,
\end{align}
this is because there is no $\{x^{\ell}\} \subset S$ with $\|x^{\ell} \| \to \infty$ such that $v(x^{\ell}) \to 0$ as ${\ell} \to \infty$ by \eqref{Rabierfunction1}.
Thus, \eqref{jiao-a2} along with \eqref{jiao0425a} implies that ${K}_{\infty}(f, S) = \emptyset.$
As a result, we get \eqref{jiao-a1}.
The reason is that the constraint set $S$ in consideration does not satisfy ${\rm (MFCQ)_{\infty}}$ (the reader may check it by definition easily).
\qed
}\end{example}

\begin{definition}{\rm(\cite[Definition 3.2]{Kim2020})
\begin{itemize}
\item[{\rm(i)}] the restrictive polynomial mapping $f$ on $S$ is called {\it proper at   $\bar y \in \ER^p$} if $$\forall \{x^{\ell}\} \subset S,\ \|x^{\ell}\| \to \infty,\ f(x^{\ell}) \leq \bar y \Longrightarrow \|f(x^{\ell})\| \to \infty\ \textrm{ as }\ \ell \to \infty;$$
\item[{\rm(ii)}] the restrictive polynomial mapping $f$ on $S$ is called {\it proper} if it is proper at every   $\bar y \in \ER^p.$
\end{itemize}
}\end{definition}

\begin{remark}
{\rm 
\begin{enumerate}
\item [{\rm (i)}] In case of $p=1,$ the properness of $f$ on $S$ is weaker than the coercivity of $f$ on $S.$
Remember that $f$ is said to be {\it coercive} on $S$ (see \cite{Bajbar2015,Bajbar2019} for more information in polynomial setting) if
$$\lim_{x \in S,\ \|x\| \rightarrow \infty} f(x) = + \infty.$$
Indeed, if $f$ is coercive on $S,$ then $f$ on $S$ is proper at $\bar y = +\infty.$
Conversely, it may fail to hold in general.
For example, let $f(x) = x$ and $S = \R,$ it is clear that $f$ is proper (at every $\bar y\in\ER$) but not coercive.
\item [{\rm (ii)}] If $p \geq 2,$ the properness of $f$ on $S$ is also weaker than other coercivity conditions, such as {\it $\R^p_+$-zero-coercivity} $f$ on $S$ introduced by Guti\'errez et al \cite[Definition 3.1]{Gutierrez2014}.
	Remember that $f$ is said to be {\it $\R^p_+$-zero-coercive} on $S$ with respect to $\xi \in \R^p_+\setminus \{\bfz\}$ if
$$\lim_{x \in S,\ \|x\| \rightarrow \infty} \langle \xi, f(x) \rangle = + \infty.$$
Actually, if $f$ is $\R^p_+$-zero-coercive on $S,$ then $f$ on $S$ is proper at $\bar y = (+\infty, \ldots, + \infty).$
Conversely, it may fail to hold in general.
For example, let $f(x_1, x_2) = (x_1, x_2)$ and $S = \R^2,$ it is clear that $f$ is proper (at every $\bar y\in\ER^p$) but not $\R^p_+$-zero-coercive with respect to any $\xi \in \R^p_+\setminus \{\bfz\}$.
\end{enumerate}
}
\end{remark}

\begin{definition}{\rm Let $f: S \to \R^p$ be a restrictive polynomial mapping and $\bar y \in  \ER^p$.
\begin{itemize}
\item[{\rm(i)}] The restrictive polynomial mapping $f$ on $S$ satisfies the {\it Palais--Smale condition} at  $\bar y$ if $\widetilde{K}_{\infty, \leq \bar y}(f, S) = \emptyset;$
\item[{\rm(ii)}] The restrictive polynomial mapping $f$ on $S$ satisfies the {\it Cerami condition} (or {\it weak Palais--Smale condition}) at $\bar y$ if ${K}_{\infty, \leq \bar y}(f, S) = \emptyset;$
\item[{\rm(iii)}] The restrictive polynomial mapping $f$ on $S$ is called {\it M-tame} at $\bar y$ if $T_{\infty, \leq \bar y}(f, S) = \emptyset.$
\end{itemize}
}\end{definition}

Observe that if the restrictive polynomial mapping $f$ on $S$ is proper at $\bar y \in \ER^p,$ then by
definition,
$$T_{\infty, \leq \bar y}(f, S) = {K}_{\infty, \leq \bar y}(f, S) = \widetilde{K}_{\infty, \leq \bar y}(f, S) =  \emptyset.$$
But  not vice versa.  (see \cite{Kim2020}).

\begin{theorem}\label{equivalent1}
Let $S$ be defined as in \eqref{SAset} and presume that the set $S$ satisfies ${\rm (MFCQ)_{\infty}}$.
Let $f: S \to \R^p$ be a restrictive polynomial mapping.
Presume that there exists $\bar y \in f(S)$ such that the section $[f(S)]_{\bar y}$ is bounded.
Then the following assertions are equivalent$:$
\begin{itemize}
\item[{\rm(i)}] The restrictive polynomial mapping $f$ on $S$ is proper at   $\bar y.$
\item[{\rm(ii)}] The restrictive polynomial mapping $f$ on $S$ satisfies the Palais--Smale condition at   $\bar y.$
\item[{\rm(iii)}] The restrictive polynomial mapping $f$ on $S$ satisfies the Cerami condition at  $\bar y.$
\item[{\rm(iv)}] The restrictive polynomial mapping $f$ on $S$ is $M$-tame at   $\bar y.$
\end{itemize}
If in addition$,$ one of the above  conditions holds$,$ then the section $[f(S)]_{\bar y}$ is compact.
\end{theorem}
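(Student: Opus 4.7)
The plan is to establish the cycle (i) $\Rightarrow$ (ii) $\Rightarrow$ (iii) $\Rightarrow$ (iv) $\Rightarrow$ (i), and then verify the compactness claim separately. The chain (i) $\Rightarrow$ (ii) $\Rightarrow$ (iii) $\Rightarrow$ (iv) is essentially free: (i) $\Rightarrow$ (ii) is the observation stated just before the theorem, that properness forces $\widetilde{K}_{\infty, \leq \bar y}(f, S) = \emptyset$; (ii) $\Rightarrow$ (iii) is the inclusion \eqref{jiao0425a} in Theorem \ref{relationship1}; and (iii) $\Rightarrow$ (iv) is the companion inclusion \eqref{jiao0425b}, which is exactly where the ${\rm (MFCQ)_{\infty}}$ assumption enters.

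The substantive content is (iv) $\Rightarrow$ (i), which I would prove by contraposition. If $f$ is not proper at $\bar y$, extract a sequence $\{x^\ell\} \subset S$ with $\|x^\ell\| \to \infty$, $f(x^\ell) \leq \bar y$, and $\{\|f(x^\ell)\|\}$ bounded (using that $[f(S)]_{\bar y}$ is bounded by hypothesis). Passing to a subsequence, $f(x^\ell) \to y^*$. Setting $r_\ell := \|x^\ell\|$, the intersection $\sph_{r_\ell} \cap S$ is a nonempty compact semi-algebraic set, so its image $f(\sph_{r_\ell} \cap S)$ is compact and contains $f(x^\ell)$. By the standard domination result in Pareto theory (every point of a compact subset of $\R^p$ dominates a Pareto minimal point, cf.\ Borwein's theorem cited in the introduction), I can select a Pareto solution $z^\ell$ of ${\rm Min}_{\R^p_+}\{f(x) \colon x \in \sph_{r_\ell} \cap S\}$ satisfying $f(z^\ell) \leq f(x^\ell) \leq \bar y$. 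Applying the Fritz--John conditions to this sphere-constrained vector problem, exactly as in the proof of Lemma \ref{unboundness}, puts $z^\ell \in \Gamma(f, S)$. Since $\|z^\ell\| = r_\ell \to \infty$ and $\{f(z^\ell)\} \subset [f(S)]_{\bar y}$ is bounded, a further subsequence gives $f(z^\ell) \to \tilde y$. This produces $\tilde y \in T_{\infty, \leq \bar y}(f, S)$, contradicting the $M$-tameness in (iv).

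For the final compactness assertion, only closedness of $[f(S)]_{\bar y}$ requires verification, since boundedness is part of the hypothesis. Take $y^\ell = f(x^\ell) \to y^*$ with $x^\ell \in S$ and $f(x^\ell) \leq \bar y$. If $\{x^\ell\}$ admits a bounded subsequence, closedness of $S$ and continuity of $f$ yield $y^* = f(x^*) \in [f(S)]_{\bar y}$. Otherwise $\|x^\ell\| \to \infty$ along a subsequence, and properness at $\bar y$ (which we may now invoke by the proven equivalence) forces $\|f(x^\ell)\| \to \infty$, contradicting $y^\ell \to y^*$. The main obstacle in the whole proof is the construction in (iv) $\Rightarrow$ (i): one has to translate the mere failure of properness into a sequence lying in the tangency variety, and the key trick is coupling the sphere-constrained Pareto existence with Fritz--John multipliers so that the resulting points really do belong to $\Gamma(f, S)$.
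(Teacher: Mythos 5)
Your proposal is correct and follows essentially the same route as the paper's proof: the chain (i) $\Rightarrow$ (ii) $\Rightarrow$ (iii) $\Rightarrow$ (iv) via the inclusions of Theorem \ref{relationship1}, then (iv) $\Rightarrow$ (i) by contradiction through sphere-constrained auxiliary Pareto problems whose Fritz--John multipliers place the solutions $z^\ell$ in $\Gamma(f,S)$, and finally compactness of the section from properness. The only cosmetic difference is that the paper imposes $f(x)\leq \bar y$ as an explicit constraint in the auxiliary problem \eqref{jiao0416a} and absorbs the resulting extra multipliers into $\tau$, whereas you secure $f(z^\ell)\leq f(x^\ell)\leq \bar y$ via the domination property of compact sets; both devices are valid.
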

\begin{proof}
By definition, the implications $[{\rm (i)}\Rightarrow {\rm (ii)} \Rightarrow {\rm (iii)}]$ is satisfied immediately.
Since the set $S$ satisfies ${\rm (MFCQ)_{\infty}}$, thus the implication
$[{\rm (iii)} \Rightarrow {\rm (iv)}]$ follow from Theorem \ref{relationship1}.

Now, we will show the implication $[{\rm (iv)} \Rightarrow {\rm (i)}]$. Assume to the contrary that the restrictive polynomial mapping $f$ on $S$ is {\it not} proper at   $\bar y.$
Then by definition, we have
$$\exists \{x^{\ell}\} \subset S,\ \|x^{\ell}\| \to \infty,\ f(x^{\ell}) \leq \bar y \Longrightarrow \|f(x^{\ell})\| \to M\ \textrm{ as }\ \ell \to \infty,$$
where $M$ is a nonnegative constant.

For each fixed $\ell \in \N,$ we consider the problem
\begin{align}\label{jiao0416a}
{\rm Min}_{\mathbb{R}^p_+}\;\left\{f(x)\,\colon \,x\in S,\ f(x) \leq \bar y\ \textrm{and}\ \|x \|^2 = \| x^\ell\|^2\right\}.\tag{P}
\end{align}
Since the set $\{ x \in \R^n \, \colon \, x\in S,\ f(x) \leq \bar y\ \textrm{and}\ \|x \|^2 = \| x^\ell\|^2\}$ is nonempty and compact, and $f$ is continuous, thus the problem~\eqref{jiao0416a} has a Pareto solution, say $z^\ell.$ According to Fritz-John optimality conditions \cite[Theorem 7.4]{Jahn2004}, there are $({\bf a}, {\bf b}, {\bf c}, {\bf d}, {\bf e}) \in (\R^p_+ \times \R^l \times \R^m_+ \times \R^p_+ \times \R) \setminus \{\bfz\}$ such that
\begin{align*}
& \sum_{k = 1}^p {\bf a}_k\nabla f_k(z^\ell) - \sum_{i = 1}^l {\bf b}_i\nabla g_i(z^\ell) - \sum_{j = 1}^m {\bf c}_j\nabla h_j(z^\ell) + \sum_{k = 1}^p {\bf d}_k\nabla f_k(z^\ell) - 2{\bf e} z^\ell = \bfz, \\
& {\bf c}_j h_j(z^\ell) = 0, j = 1, \ldots, m,  \ \textrm{ and }\ {\bf d}_k (f_k(z^\ell) - \bar y_k) = 0, k = 1, \ldots, p.
\end{align*}
Letting $\tau_k:={\bf a}_k + {\bf d}_k,$ $k = 1, \ldots, p,$ $\lambda_i := {\bf b}_i,$ $i = 1, \ldots, l,$ $\nu_j := {\bf c}_j,$ $j = 1, \ldots, m$ and $\mu:=2{\bf e},$ it yields that
\begin{align*}
& \sum_{k = 1}^p \tau_k\nabla f_k(z^\ell) - \sum_{i = 1}^l \lambda_i\nabla g_i(z^\ell) - \sum_{j = 1}^m \nu_j\nabla h_j(z^\ell) - \mu z^\ell = \bfz, \\
& \nu_j h_j(z^\ell) = 0, j = 1, \ldots, m,  \ \textrm{ and }\ {\bf d}_k (f_k(z^\ell) - \bar y_k) = 0, k = 1, \ldots, p.
\end{align*}
Clearly, $(\tau, \lambda, \nu, \mu) \not= \bfz,$ then $z^\ell \in \Gamma (f, S).$

Consequently,     $\{ z^\ell \}$ has the following properties:
\begin{itemize}
\item[(d1)] $\{z^\ell\} \subset \Gamma (f, S);$
\item[(d2)] $\|z^\ell\| = \|x^\ell\| \to +\infty$ as $\ell \to +\infty;$ and
\item[(d3)] $f(z^\ell) \leq \bar y$ for all $\ell \in \N.$
\end{itemize}
Observe that the assumption that $[f(S)]_{\bar y}$ is bounded,  without loss of generality, we assume that $f (z^\ell) \to y.$ Clearly, $y \leq \bar y$.
Thus $y \in {T}_{\infty, \leq \bar y}(f, S).$
That is, ${T}_{\infty, \leq \bar y}(f, S) \not= \emptyset,$ which contradicts to $M$-tameness.

Finally, let us show the compactness of $[f(S)]_{\bar y}$ by condition (i).
Indeed, if the restrictive polynomial mapping $f$ on $S$ is proper at $\bar y,$ then the set $S':=\{x \in S \colon f(x) \leq \bar y\}$ is bounded.
Furthermore, let a sequence $\{x^{\ell}\} \subset S$ satisfies $f(x^\ell) \leq \bar y$ for all $\ell \in \N.$
As  $\{f(x^\ell)\} \subset [f(S)]_{\bar y},$ by the condition (i) we obtain that $\{x^\ell\}$ is bounded.
Meanwhile, by the closedness of $S$ and continuity of the mapping $f,$ it ensures that $S'$ is closed.
Thus, $S'$ is compact, which together with the continuity of the mapping $f,$ yields that the section $[f(S)]_{\bar y}$ is compact.
\end{proof}

\section{Existence of Pareto Solutions}\label{Sec:5}

\begin{theorem}\label{exist-result}
Let $S$ be defined as in \eqref{SAset} and assume that the set $S$ satisfies ${\rm (MFCQ)_{\infty}}$.
Let $f: S \to \R^p$ be a restrictive polynomial mapping.
Presume that there exists $\bar y \in f(S)$ such that the section $[f(S)]_{\bar y}$ is bounded.
Then the problem~\eqref{problem} possesses at least one Pareto solution$,$ if one of the following  conditions holds$:$
\begin{itemize}
\item[{\rm(i)}] The restrictive polynomial mapping $f$ on $S$ is proper at  $\bar y.$
\item[{\rm(ii)}] The restrictive polynomial mapping $f$ on $S$ satisfies the Palais--Smale condition at  $\bar y.$
\item[{\rm(iii)}] The restrictive polynomial mapping $f$ on $S$ satisfies the Cerami condition at   $\bar y.$
\item[{\rm(iv)}] The restrictive polynomial mapping $f$ on $S$ is $M$-tame at $\bar y.$
\end{itemize}
\end{theorem}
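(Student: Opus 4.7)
The strategy is to use Theorem~\ref{equivalent1} to reduce all four hypotheses to a single clean geometric fact, namely that the section $[f(S)]_{\bar y}$ is nonempty and compact, and then invoke the classical vector-optimization existence result (Borwein \cite{Borwein1983}) to extract a Pareto value, whose preimage will be the desired Pareto solution.

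\textbf{Step 1 (Reduction).} By Theorem~\ref{equivalent1}, under the standing assumptions that $S$ satisfies ${\rm (MFCQ)_{\infty}}$ and $[f(S)]_{\bar y}$ is bounded, the four conditions (i)--(iv) are mutually equivalent, and the closing ``in addition'' statement of that theorem guarantees that whenever one of them holds the section $[f(S)]_{\bar y}$ is actually compact. Moreover, $\bar y\in f(S)$ forces $\bar y\in [f(S)]_{\bar y}$, so the section is also nonempty. Hence it suffices to produce a Pareto solution under the sole assumption that $[f(S)]_{\bar y}$ is a nonempty compact subset of $\R^p$.

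\textbf{Step 2 (Existence of a Pareto value in the section).} On a nonempty compact set ordered by the closed convex pointed cone $\R^p_+$, the classical result of Borwein \cite[Theorem 1]{Borwein1983} (see also \cite[Theorem~2.10]{Ehrgott2005}) guarantees the existence of a minimal element. Applying this to the section, we obtain some $y^\ast\in [f(S)]_{\bar y}$ such that $y^\ast$ is minimal in $[f(S)]_{\bar y}$ with respect to the partial order induced by $\R^p_+$.

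\textbf{Step 3 (Upgrading to a Pareto value of $f(S)$, and conclusion).} I claim that $y^\ast$ is in fact a Pareto value of the full problem~\eqref{problem}. Indeed, suppose for contradiction that there exists $x\in S$ with $f(x)\in y^\ast - (\R^p_+\setminus\{\bfz\})$. Then $f(x)\le y^\ast\le \bar y$, so $f(x)\in [f(S)]_{\bar y}$, which contradicts the minimality of $y^\ast$ within the section. Therefore $y^\ast\in \mathrm{val}\,\eqref{problem}$. Picking any $x^\ast\in S$ with $f(x^\ast)=y^\ast$ (such $x^\ast$ exists by definition of $f(S)$) gives a Pareto solution, and the theorem is proved.

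\textbf{Main obstacle.} Essentially all of the real analytic content, namely the semialgebraic curve construction and the role of ${\rm (MFCQ)_{\infty}}$, has already been absorbed into Theorems~\ref{relationship1} and~\ref{equivalent1}. The proof of Theorem~\ref{exist-result} itself is therefore expected to be short; the only point requiring care is the verification in Step~3 that a minimal element of the truncated set $[f(S)]_{\bar y}$ is automatically a minimal element of the whole image $f(S)$, which I handled by exploiting transitivity of the order together with the defining inclusion of the section.
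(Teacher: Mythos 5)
Your proposal is correct and follows essentially the same route as the paper: invoke Theorem~\ref{equivalent1} to upgrade boundedness of $[f(S)]_{\bar y}$ to compactness under any of (i)--(iv), then conclude via Borwein's existence criterion \cite[Theorem 1]{Borwein1983}. Your Steps 2--3 merely inline the content of Borwein's theorem (minimal elements of a nonempty compact section are Pareto values of the whole image), which the paper cites wholesale instead of reproving.
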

\begin{proof}
By Theorem~\ref{equivalent1}, it yields that the section $[f(S)]_{\bar y}$ is compact providing one of the above equivalent conditions (i)--(iv) holds.
Therefore, the result follows by \cite[Theorem 1]{Borwein1983} (or \cite[Theorem 2.10]{Ehrgott2005}).
\end{proof}

\begin{remark}{\rm
Note that if the restrictive polynomial mapping $f$ on $S$ is proper at the   $\bar y \in f(S),$ and the section $[f(S)]_{\bar y}$ is bounded, then the problem~\eqref{problem} obviously possesses at least one Pareto solution.
However, as mentioned in \cite{Kim2020}, the problem of checking a function is proper (or coercive) is {\it strongly NP-hard} even for polynomials of degree 4 (see \cite[Theorem 3.1]{Ahmadi2020}).
}\end{remark}

\begin{example}{\rm
Let $x:=(x_1, x_2) \in \R^2.$
Let
\begin{align*}
f(x):=&\ f(x_1, x_2, x_3) = \left(x_3, (1- x_1x_2)^2 + x_2^2 + x_3^2\right).
\end{align*}
Consider the following vector polynomial optimization problem with constraints
\begin{align}\label{examPareto}
{\rm Min}_{\mathbb{R}^2_+}\;\big\{f(x)\,\colon \,x\in S\big\},\tag{VPO$_2$}
\end{align}
where $S:=\{(x_1, x_2, x_3) \in \R^3 \colon x_1 \geq 0, x_2 \geq 0\}.$
Clearly, the set $S$ satisfies ${\rm (MFCQ)_{\infty}}$.
The image $f(S)$ of $f$ over $S$ can be seen in {\sc Figure}~1.
\begin{figure}[H]
\begin{minipage}{.4\textwidth}
\includegraphics[height=8cm,width=6cm]{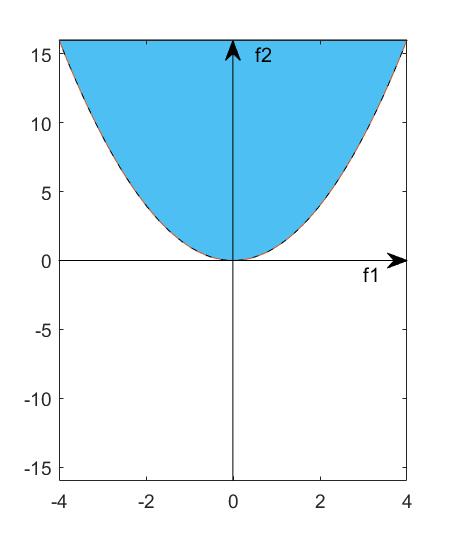}
\caption{The image $f(S)$.}
\end{minipage}
\begin{minipage}{.4\textwidth}
\includegraphics[height=8cm,width=6cm]{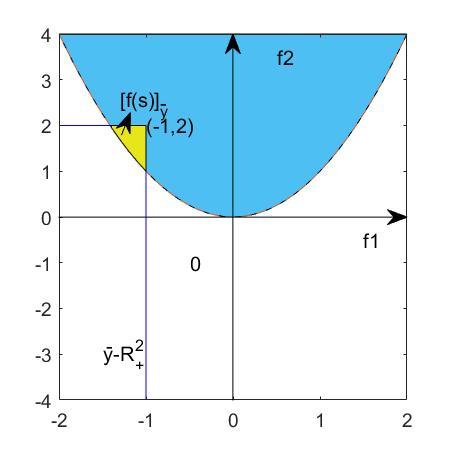}
\caption{The section $[f(S)]_{\bar y}$.}
\end{minipage}
\end{figure}
Let $\bar y = (-1, 2).$
It is clear that the section $[f(S)]_{\bar y}$ is bounded but not closed (see {\sc Figure}~2).
On the other hand, by Definition~\ref{Pareto}, one can easily verify that $\mathrm{sol}\,({\rm VPO}_2) \not= \emptyset.$
In this case, $\widetilde{K}_{\infty, \leq \bar y}(f, S) \not= \emptyset.$
\qed
}\end{example}

\begin{example}{\rm
Let $x:=(x_1, x_2) \in \R^2.$
Let
\begin{align*}
f(x):=&\ (f_1(x), f_2(x)) = \left(x_1^2x_2^4 + x_1^4x_2^2 - 3x_1^2x_2^2 + 1, (x_1 - 1)^2 + (x_2 - 1)^2\right),
\end{align*}
where $f_1(x)$ is known as Motzkin polynomial (see \cite{HaHV2017}).
Consider the following vector polynomial optimization problem with constraints
\begin{align}\label{exam09}
{\rm Min}_{\mathbb{R}^2_+}\;\big\{f(x)\,\colon \,x\in S\big\},\tag{VPO$_3$}
\end{align}
where $S:=\left\{x \in \R^2 \colon h_1(x) = x_1 \geq 0, h_2(x) = x_2 \geq 0\right\} = \R_+ \times \R_+.$
Clearly, the set $S$ satisfies ${\rm (MFCQ)_{\infty}}$, and by definition, we can easily verify that $T_{\infty, \leq \bar y}(f, S) = \emptyset$ for $\bar y = (0, 0).$
Hence, along with Theorem~\ref{exist-result}, $\mathrm{sol}\,({\rm VPO}_3) \not= \emptyset.$

On the other hand, a simple calculation yields that the solution set is $\{(1,1)\}.$
\qed
}\end{example}

\begin{corollary}
Let $S:=\{x \in \R^n \colon Ax = b\},$ where $A\in \R^{m \times n}, b \in \R^m,$ and let $f: \R^n \to \R^p$ be a linear mapping with $f(x):= Cx,$ where $C\in \R^{p \times n}.$
Presume that both the rows of $A$ and $C$ are linearly independent.
If there exists $\bar y \in f(S)$ such that the section $[f(S)]_{\bar y}$ is bounded.
Then the problem~\eqref{problem} possesses at least one Pareto solution.
\end{corollary}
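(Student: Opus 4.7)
My plan is to reduce the corollary to Borwein's classical existence result by showing that the section $[f(S)]_{\bar y}$ is in fact closed. Since it is bounded by hypothesis, this makes it compact, and the classical theorem \cite[Theorem~1]{Borwein1983} (see also \cite[Theorem~2.10]{Ehrgott2005}) immediately yields a Pareto value $y^\star \leq \bar y$ of \eqref{problem}; any pre-image $x^\star \in S$ with $f(x^\star) = y^\star$ is then a Pareto solution.

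The only genuinely new step is the closedness of $f(S)$. Since $\bar y \in f(S),$ the feasible set is nonempty; picking $x_0 \in S$ one has $S = x_0 + \ker A$ and $f(S) = f(x_0) + C(\ker A).$ Because $C(\ker A)$ is a linear subspace of the finite-dimensional space $\R^p,$ it is automatically closed, so $f(S)$ is a closed affine subspace. Consequently $[f(S)]_{\bar y} = f(S) \cap (\bar y - \R^p_+)$ is an intersection of two closed sets, hence closed; combined with the boundedness hypothesis and the fact that $\bar y$ itself belongs to it, the section is nonempty and compact.

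An alternative route in the spirit of the paper is to verify the hypotheses of Theorem~\ref{exist-result} directly. The linear independence of the rows of $A,$ together with the absence of inequality constraints (so $J(x) = \emptyset$ for every $x \in S$), gives ${\rm (MFCQ)_{\infty}}$ trivially. Condition~(i) of Theorem~\ref{exist-result}---properness of $f$ on $S$ at $\bar y$---reduces, via a recession-cone analysis of the polyhedron $\{x \in \R^n : Ax = b,\; Cx \leq \bar y\},$ to the requirement $\ker A \cap \ker C = \{0\}$: indeed, for any recession direction $v$ one has $Av = 0$ and $Cv \leq 0,$ and if $Cv \neq 0$ then some component of $C(x + tv) = Cx + tCv$ would tend to $-\infty$ along $t \to +\infty$ inside $[f(S)]_{\bar y},$ contradicting boundedness; so the recession cone equals $\ker A \cap \ker C,$ which is $\{0\}$ under the joint linear-independence reading of the hypothesis, and properness follows. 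The main obstacle in either route is essentially the same linear-algebraic observation: the linear image of a closed affine subspace is closed affine in finite dimensions. Once this is in place, everything else is a direct invocation of previously stated results.
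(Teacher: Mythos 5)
Your first argument is correct and complete: since $\bar y\in f(S)$ the set $S$ is a nonempty affine subspace $x_0+\ker A$, so $f(S)=Cx_0+C(\ker A)$ is a translate of a linear subspace of $\R^p$ and hence closed; the section $[f(S)]_{\bar y}$ is then closed, bounded and nonempty (it contains $\bar y$), and \cite[Theorem 1]{Borwein1983} gives a Pareto value, whence a Pareto solution. Note that this route never uses the linear‑independence hypotheses. The paper states the corollary without proof, evidently as an instance of Theorem~\ref{exist-result}; your direct argument is genuinely different, and in fact it is the safer one. Under the stated hypotheses none of conditions (i)--(iv) of Theorem~\ref{exist-result} need hold: take $n=3$, $A=(1,0,0)$, $b=0$, $C$ with rows $(1,1,0)$ and $(0,-1,0)$, $\bar y=(0,0)$. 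Then the rows of $A$ and of $C$ are each linearly independent, $f(S)=\{(t,-t):t\in\R\}$, the section $[f(S)]_{\bar y}=\{(0,0)\}$ is bounded, yet $x^{\ell}=(0,0,\ell)$ shows $f$ is not proper at $\bar y$ (and, by Theorem~\ref{equivalent1}, the Palais--Smale, Cerami and $M$-tameness conditions fail as well). The conclusion of the corollary still holds there, but only via the closed-affine-image argument.

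This same example exposes the gap in your ``alternative route in the spirit of the paper.'' Your recession-cone analysis correctly shows that properness at $\bar y$ amounts to $\ker A\cap\ker C=\{\bfz\}$, but that condition does \emph{not} follow from the hypothesis: reading it as ``the rows of $A$ are independent and the rows of $C$ are independent'' one gets no control on $\ker A\cap\ker C$ (in the example above it is the $x_3$-axis), and reading it as joint independence of the stacked rows gives $\dim(\ker A\cap\ker C)=n-m-p$, which vanishes only when $n=m+p$ --- and in that case $C(\ker A)=\R^p$, so the bounded-section hypothesis can never be met. So the second route is not a valid alternative proof; keep the first one as the proof and, if you wish, record the example as showing that the corollary does not reduce to Theorem~\ref{exist-result}.
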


\section{Conclusions and Further Discussions}\label{Sec:7}
In this paper, we derive some sufficient conditions for the existence of Pareto solutions to the consitained vector polynomial optimization problem~\eqref{problem}.
Such sufficient conditions are based on the Palais--Smale condition, the Cerami condition, the $M$-tameness, and the properness for the restrictive polynomial mapping $f$ over $S$.
Among others, it is worth mentioning that our results are derived under the assumption ${\rm (MFCQ)_{\infty}}$ of $S$, which are significantly different to the results in the literature \cite{Kim2019,Kim2020}.
Now, having the results in hand, we will close this paper by mentioning the following possible research directions as future investigations.
\begin{itemize}
\item[{\rm (i)}]
[Finding (weak) Pareto solutions] When all functions in the problem~\ref{problem} are linear, Blanco et al \cite{Blanco2014} obtain the set of Pareto solutions by using a semidefinite programming method.
When the functions in the problem~\ref{problem} are convex polynomials, Moment-SOS relaxation methods are invoked to compute Pareto solutions in \cite{Jiao2020,Lee2018,Lee2019,Lee2021}. Further important results on computing Pareto solutions/values to the problem~\ref{problem} are given in \cite{Magron2014,Magron2015,Nie2021}.
A natural question arises: how to compute the Pareto solutions/values to the problem~\ref{problem} without any convexity assumptions?
\item[{\rm (ii)}]  [On vector polynomial variational inequality problems] By using the similar techniques (with possibly significantly modifications), it is also very interesting to investigate  the  vector polynomial variational inequality problems, such problems can be seen in \cite{Huong2016,Yen2016}.
\end{itemize}

\subsection*{Acknowledgments}
The authors wish to thank Ti\ees n-S\ow n Ph\d{a}m for many valuable suggestions.
This work was supported by the National Natural Sciences Foundation of China (11971339, 11771319).


\begin{thebibliography}{10}

\bibitem{Ahmadi2020}
A.~A. Ahmadi and J.~Zhang.
\newblock On the complexity of testing attainment of the optimal value in
  nonlinear optimization.
\newblock {\em Mathematical Programming}, 184(1-2):221--241, 2020.

\bibitem{Bajbar2015}
T.~Bajbar and O.~Stein.
\newblock Coercive polynomials and their {N}ewton polytopes.
\newblock {\em SIAM Journal on Optimization}, 25(3):1542--1570, 2015.

\bibitem{Bajbar2019}
T.~Bajbar and O.~Stein.
\newblock Coercive polynomials: stability, order of growth, and {N}ewton
  polytopes.
\newblock {\em Optimization}, 68(1):99--124, 2019.

\bibitem{Bao2007}
T.~Q. Bao and B.~S. Mordukhovich.
\newblock Variational principles for set-valued mappings with applications to
  multiobjective optimization.
\newblock {\em Control} \& {\em Cybernetics}, 36(3):531--562, 2007.

\bibitem{Bao2010MP}
T.~Q. Bao and B.~S. Mordukhovich.
\newblock Relative {P}areto minimizers for multiobjective problems: existence
  and optimality conditions.
\newblock {\em Mathematical Programming}, 122(2):301--347, 2010.

\bibitem{Belousov2002}
E.~G. Belousov and D.~Klatte.
\newblock A {F}rank--{W}olfe type theorem for convex polynomial programs.
\newblock {\em Computational Optimization and Applications}, 22(1):37--48,
  2002.

\bibitem{RASS}
R.~Benedetti and J.~Risler.
\newblock {\em Real Algebraic and Semi-algebraic Sets}.
\newblock Hermann, Paris, 1991.

\bibitem{Blanco2014}
V.~Blanco, J.~Puerto, and S.~E. H.~B. Ali.
\newblock A semidefinite programming approach for solving multiobjective linear
  programming.
\newblock {\em Journal of Global Optimization}, 58(3):465--480, 2014.

\bibitem{Bochnak1998}
J.~Bochnak, M.~Coste, and M.-F. Roy.
\newblock {\em Real Algebraic Geometry}, volume~36.
\newblock Springer-Verlag, New York, 1998.

\bibitem{Bolte2018}
J.~Bolte, A.~Hochart, and E.~Pauwels.
\newblock Qualification conditions in semialgebraic programming.
\newblock {\em SIAM Journal on Optimization}, 28(2):1867--1891, 2018.

\bibitem{Borwein1983}
J.~M. Borwein.
\newblock On the existence of {P}areto efficient points.
\newblock {\em Mathematics of Operations Research}, 8(1):64--73, 1983.

\bibitem{Corley1980}
H.~Corley.
\newblock An existence result for maximization with respect to cones.
\newblock {\em Journal of Optimization Theory and Applications},
  31(2):277--281, 1980.

\bibitem{Deng1998a}
S.~Deng.
\newblock Characterizations of the nonemptiness and compactness of solution
  sets in convex vector optimization.
\newblock {\em Journal of Optimization Theory and Applications},
  96(1):123--131, 1998.

\bibitem{Deng1998b}
S.~Deng.
\newblock On efficient solutions in vector optimization.
\newblock {\em Journal of Optimization Theory and Applications},
  96(1):201--209, 1998.

\bibitem{Deng2010}
S.~Deng.
\newblock Boundedness and nonemptiness of the efficient solution sets in
  multiobjective optimization.
\newblock {\em Journal of Optimization Theory and Applications}, 144(1):29--42,
  2010.

\bibitem{Dias2021}
L.~R.~G. Dias, C.~Joi\c{t}a, and M.~Tib\v{a}r.
\newblock Atypical points at infinity and algorithmic detection of the
  bifurcation locus of real polynomials.
\newblock {\em Mathematische Zeitschrift}, 298(3-4):1545--1558, 2021.

\bibitem{Dias2017}
L.~R.~G. Dias, S.~Tanab\'e, and M.~Tib\v{a}r.
\newblock Toward effective detection of the bifurcation locus of real
  polynomial maps.
\newblock {\em Foundations of Computational Mathematics}, 17(3):837--849, 2017.

\bibitem{Dias2015}
L.~R.~G. Dias and M.~Tib\v{a}r.
\newblock Detecting bifurcation values at infinity of real polynomials.
\newblock {\em Mathematische Zeitschrift}, 279(1-2):311--319, 2015.

\bibitem{Dinh2014}
S.~T. Dinh, H.~V. H\`a, and T.-S. Ph\d{a}m.
\newblock A {F}rank--{W}olfe type theorem for nondegenerate polynomial
  programs.
\newblock {\em Mathematical Programming}, 147(1-2):519--538, 2014.

\bibitem{Ehrgott2005}
M.~Ehrgott.
\newblock {\em Multicriteria Optimization (2nd ed.)}.
\newblock Springer, Berlin, 2005.

\bibitem{Gutierrez2014}
C.~Guti\'{e}rrez, R.~L\'{o}pez, and V.~Novo.
\newblock Existence and boundedness of solutions in infinite-dimensional vector
  optimization problems.
\newblock {\em Journal of Optimization Theory and Applications},
  162(2):515--547, 2014.

\bibitem{HaHV2017}
H.~V. H\`a and T.~S. Ph\d{a}m.
\newblock {\em Genericity in polynomial optimization}.
\newblock World Scientific Publishing, Singapore, 2017.

\bibitem{Ha2006JMAA}
T.~X.~D. H\`a.
\newblock Variants of the Ekeland variational principle for a set-valued map
  involving the clarke normal cone.
\newblock {\em Journal of Mathematical Analysis and Applications},
  316(1):346--356, 2006.

\bibitem{Hartley1978}
R.~Hartley.
\newblock On cone-efficiency, cone-convexity and cone-compactness.
\newblock {\em SIAM Journal on Applied Mathematics}, 34(2):211--222, 1978.

\bibitem{Huang2004}
X.~X. Huang, X.~Q. Yang, and K.~L. Teo.
\newblock Characterizing nonemptiness and compactness of the solution set of a
  convex vector optimization problem with cone constraints and applications.
\newblock {\em Journal of Optimization Theory and Applications},
  123(2):391--407, 2004.

\bibitem{Huong2016}
N.~T.~T. Huong, J.-C. Yao, and N.~D. Yen.
\newblock Polynomial vector variational inequalities under polynomial
  constraints and applications.
\newblock {\em SIAM Journal on Optimization}, 26(2):1060--1071, 2016.

\bibitem{Jahn2004}
J.~Jahn.
\newblock {\em Vector Optimization: Theory Applications, and Extensions}.
\newblock Springer, Berlin, 2004.

\bibitem{Jelonek2014}
Z.~Jelonek and K.~Kurdyka.
\newblock Reaching generalized critical values of a polynomial.
\newblock {\em Mathematische Zeitschrift}, 276(1-2):557--570, 2014.

\bibitem{Jiao2020}
L.~G. Jiao, J.~H. Lee, and Y.~Y. Zhou.
\newblock A hybrid approach for finding efficient solutions in vector
  optimization with {SOS}-convex polynomials.
\newblock {\em Operations Research Letters}, 48(2):188--194, 2020.

\bibitem{Kim2020}
D.~S. Kim, B.~S. Mordukhovich, T.~S. Ph\d{a}m, and N.~V. Tuyen.
\newblock Existence of efficient and properly efficient solutions to problems
  of constrained vector optimization.
\newblock {\em Mathematical Programming}, 190(1-2): 259--283, 2021.

\bibitem{Kim2019}
D.~S. Kim, T.~S. Ph\d{a}m, and N.~V. Tuyen.
\newblock On the existence of {P}areto solutions for polynomial vector
  optimization problems.
\newblock {\em Mathematical Programming}, 177(1-2):321--341, 2019.

\bibitem{Lee2018}
J.~H. Lee and L.~G. Jiao.
\newblock Solving fractional multicriteria optimization problems with sum of
  squares convex polynomial data.
\newblock {\em Journal of Optimization Theory and Applications},
  176(2):428--455, 2018.

\bibitem{Lee2019}
J.~H. Lee and L.~G. Jiao.
\newblock Finding efficient solutions for multicriteria optimization problems
  with {SOS}-convex polynomials.
\newblock {\em Taiwanese Journal of Mathematics}, 23(6):1535--1550, 2019.

\bibitem{Lee2021}
J.~H. Lee, N.~Sisarat, and L.~G. Jiao.
\newblock Multi-objective convex polynomial optimization and semidefinite
  programming relaxations.
\newblock {\em Journal of Global Optimization}, 80(1):117--138, 2021.

\bibitem{Liu2021}
D.~Y. Liu, R.~Hu, and Y.~P. Fang.
\newblock Solvability of a regular polynomial vector optimization problem
  without convexity.
\newblock 2021.
\newblock {\em Optimization}, https://doi.org/10.1080/02331934.2021.1990285.

\bibitem{Luc1989}
D.~T. Luc.
\newblock {\em Theory of Vector Optimization}.
\newblock Springer-Verlag, Berlin, 1989.

\bibitem{Magron2014}
V.~Magron, D.~Henrion, and J.~B. Lasserre.
\newblock Approximating {P}areto curves using semidefinite relaxations.
\newblock {\em Operations Research Letters}, 42(6-7):432--437, 2014.

\bibitem{Magron2015}
V.~Magron, D.~Henrion, and J.~B. Lasserre.
\newblock Semidefinite approximations of projections and polynomial images of
  semialgebraic sets.
\newblock {\em SIAM Journal on Optimization}, 25(4):2143--2164, 2015.

\bibitem{Milnor1968}
J.~Milnor.
\newblock {\em Singular Points of Complex Hypersurfaces}, volume~61 of {\em
  Annals of Mathematics Studies}.
\newblock Princeton University Press, Princeton, 1968.

\bibitem{Nie2021}
J.~Nie and Z.~Yang.
\newblock The multi-objective polynomial optimization.
\newblock 2021.
\newblock arXiv:2108.04336.

\bibitem{Pham2020arXiv}
T.-S. Ph\d{a}m
\newblock Tangencies and polynomial optimization.
\newblock 2019.
\newblock arXiv:1902.06041v2

\bibitem{Sawaragi1985}
Y.~Sawaragi, H.~Nakayama, and T.~Tanino.
\newblock {\em Theory of Multiobjective Optimization}.
\newblock Academic Press, Inc., Orlando, FL, 1985.

\bibitem{Dries1996}
L.~van~den Dries and C.~Miller.
\newblock Geometric categories and o-minimal structures.
\newblock {\em Duke Mathematical Journal}, 84:497--540, 1996.

\bibitem{Yen2016}
N.~D. Yen.
\newblock An introduction to vector variational inequalities and some new
  results.
\newblock {\em Acta Mathematica Vietnamica}, 41(3):505--529, 2016.

\end{thebibliography}

\end{document}